\newtheorem{theo}{Theorem}[section]
\newtheorem{rem}{Remark}[section]
\newtheorem{lemma}{Lemma}[section]
\newtheorem{prop}{Proposition}[section]
\newtheorem{ex}{Example}[section]
\newtheorem{conj}{Conjecture}
\newtheorem{op}{Open Problem}
\def\Z{{\mathbb Z}}
\def\whitebox{{\hbox{\hskip 1pt
        \vrule height 6pt depth 1.5pt
        \lower 1.5pt\vbox to 7.5pt{\hrule width
                  3.2pt\vfill\hrule width 3.2pt}%
        \vrule height 6pt depth 1.5pt
        \hskip 1pt } }}
\def\qed{\ifhmode\allowbreak\else\nobreak\fi\hfill\quad\nobreak\whitebox\medbreak}
\begin{document}
\title{Generalized bent functions - sufficient \\ conditions and related constructions}

\author{
S. Hod\v zi\'c \footnote {University of Primorska, FAMNIT, Koper,
Slovenia, e-mail: samir.hodzic@famnit.upr.si}\and E.~Pasalic\footnote{
University of Primorska, FAMNIT \& IAM, Koper, Slovenia, e-mail: enes.pasalic6@gmail.com}
}

\date{}
\maketitle

\begin{abstract}
%In this article we present a broader theoretical framework useful in studying the properties
%of so-called generalized bent functions.
The necessary and sufficient conditions for a class of functions $f:\mathbb{Z}_2^n \rightarrow \mathbb{Z}_q$, where $q \geq 2$ is an even positive integer, have been recently identified for $q=4$ and $q=8$. In this article we give an alternative characterization of the generalized Walsh-Hadamard transform in terms of the Walsh spectra of the component Boolean functions of $f$, which then allows us to derive sufficient conditions that $f$ is generalized bent for any even $q$. The case when $q$ is not a power of two, which has not been addressed previously,  is treated separately and a suitable representation in terms of the component functions is employed. Consequently, the derived results lead to generic construction methods of this class of functions. The main remaining task, which is not answered in this article, is whether the sufficient conditions are also necessary. There are some indications that this might be true which is also formally confirmed for generalized bent functions that belong to the class of generalized Maiorana-McFarland functions (GMMF), but still we were unable to completely specify (in terms of necessity)  gbent conditions.
% to     We give the sufficient conditions (and in many cases also necessary) for   generalized bent functions when these functions are represented as a linear combination of: generalized bent; Boolean bent; and a mixture of generalized bent and Boolean bent functions.
%%from $\mathcal{GB}^m_{q}$ ($m$ even and odd), functions from $\mathcal{B}_n$, and from both $\mathcal{GB}^m_{q}$ or $\mathcal{B}_n.$
%These conditions are relatively easy to satisfy and by varying the variables that specify these linear combinations many different classes of generalized bent functions can be derived. In particular, based on these results, we provide some generic construction methods of these functions and demonstrate that  some previous methods are just special cases of the results given in this article.

\bigskip
\textbf{Keywords:} Generalized bent functions, (generalized) Walsh-Hadamard transform, (generalized) Marioana-McFarland class.

\end{abstract}

\section{Introduction}
A generalization  of Boolean  functions    was introduced  in \cite{Kum85} for considering a much larger class of mappings from $\mathbb{Z}_q^n$ to $\mathbb{Z}_q$. Nevertheless, due to a more natural connection to cyclic codes over rings, functions from $\mathbb{Z}_2^n$ to $\mathbb{Z}_q$, where $q \geq 2$ is a positive integer, have drawn even more attention \cite{KUSchm2007}.  In \cite{KUSchm2007}, Schmidt studied the relations between
generalized bent functions, constant amplitude codes and $\mathbb{Z}_4$-linear codes ($q=4$).
 The latter class of mappings is called {\em generalized bent (gbent) functions} throughout this article. There are also other generalizations of bent functions such as bent functions over finite Abelian groups for instance \cite{Solod2002}. A nice survey on different generalizations of bent functions can be found in \cite{Tokareva}.

 There are several reasons for studying generalized bent functions. In the first place there is a close connection of these objects to standard bent functions and for instance the relationship between the bent conditions imposed on the component functions of gbent functions (using a suitable decomposition) for the quaternary and octal case were investigated in \cite{Tok} and \cite{Octal}, respectively.
 Also, in many other recent works \cite{SecCon, OnCross, BentGbent} the authors mainly consider the bentness of the component functions for a given prescribed form of a gbent functions. In particular, it was shown in \cite{BentGbent} that some standard classes of bent functions such as Mariaona-McFarland class and Dillon's class naturally induce gbent functions.  A particular class of the functions represented as $f(x)=c_1a(x)+c_2b(x)$ were thoroughly investigated in terms of the imposed conditions on the coefficients $c_i \in \mathbb{Z}_q$ and the choice of the Boolean functions $a$ and $b$, so that $f$ is gbent \cite{SH}. A more interesting research challenge in this context is to propose some direct construction methods of functions  from $\mathbb{Z}_2^n$ to $\mathbb{Z}_q$, which for suitable $q$ may give a nontrivial decomposition into standard bent functions that possibly do not belong to the known classes of bent functions. The second reason for the interest in these objects is a close relationship between certain objects used in the design of orthogonal frequency-division multiplexing (OFDM)
modulation technique which in certain cases suffers from relatively high peak-to-mean envelope power ratio
(PMEPR). To overcome these issues, the $q$-ary sequences lying in complementary pairs \cite{Golay} (also called Golay sequences) having a low PMEPR  can be
easily determined from the generalized Boolean function associated with this sequence, see \cite{KUGenRM} and the references therein.

\bigskip
In this article, we address an important problem of specifying the conditions that $f:\mathbb{Z}_{2}^n \rightarrow \mathbb{Z}_q$ is gbent. In difference to the previous work \cite{BentGbent,Octal}, where the sufficient and necessary conditions when $q=4$ and $q=8$ were derived, we consider the general case of $q$ being even and subsequently derive some sufficient conditions for $f$ to be gbent. We emphasize the fact that the sufficient and necessary conditions for $q=8$ were derived in a nontrivial manner employing so-called Jacobi sums and the same technique could not be applied for larger $q$ of the form $2^h$. Nevertheless, our sufficient conditions completely coincide in this case and therefore they are also necessary as well. That our sufficient conditions may also be necessary at the same time is further supported by the fact that the GMMF class of gbent functions essentially satisfies these conditions, see Section~\ref{sec:GMMF}. The major difficulty in proving that the sufficiency is at the same necessity as well lies is the hardness of dealing with certain character sums.

The whole approach and the sufficient conditions derived here is based on an alternative characterization and computation of the generalized Walsh-Hadamard spectral values through using the standard Walsh spectra of the component Boolean functions $a_i$ when $f:\mathbb{Z}_{2}^n \rightarrow \mathbb{Z}_q$  is (uniquely) represented as $f(x)=a_0(x) + 2a_1(x)+ \cdots + 2^{h-1}a_{h-1}(x)$. While this representation allows for a relatively easy treatment of the case $q=2^h$, it turns out that it is not so efficient when considering even $q$ in the range $2^{h-1} < q < 2^{h}$. Even though given the input and output values this representation is still unique for even $2^{h-1} < q < 2^{h}$, to give some sufficient conditions for the gbent property in this case we were forced to consider a different form of $f$ which necessarily contains the coefficient $q/2$ in its representation. Thus, in this case (again to avoid some difficult character sums) the function $f$ is rather represented as $f(x)=\frac{q}{2} a(x)+ a_0(x) + 2a_1(x)+ \cdots + 2^{h-2}a_{h-2}(x)$ which then simplify the analysis of their properties. Using these representations we derive a compact and simple formula to compute the generalized Walsh-Hadamard spectra in terms of the spectra of the component functions of $f$. Based on this formula some sufficient conditions for the gbent property are derived which in turn gives us the possibility to specify certain generic classes of gbent functions.

%   a  wider framework of defining gbent functions as linear combinations of two functions where these linear components are either Boolean, gbent functions or a mixture of the two. We first analyze the necessary conditions on the coefficients $c_1,c_2$ if a function $f:\mathbb{Z}_2^n \rightarrow \mathbb{Z}_q$ is represented as $f(x)=c_1a(x)+c_2b(x)$ and deduce the possible choices of  these coefficients so that $f$ is possibly gbent. Then, we investigate the most interesting choices of these coefficients and further refine the conditions on $a(x)$ and  $b(x)$ that ensure that $f$ is a gbent function. Several cases and the corresponding conditions on the coefficients and the constituent functions $a$ and $b$ are derived. In case, that both $a$ and $b$ are Boolean functions a detailed analysis concerning the necessary conditions on the coefficients $c_1,c_2$ is provided. This representation as a linear combination of two functions is then further extended  to include a characterization of the generalized bent property if the function is viewed as a concatenation of functions from a smaller variable space. It turns out that these two approaches generalize  the previous construction methods \cite{SecCon,OnCross,BentGbent,Octal}, which can be viewed as special cases in this framework, in particular most of the known results are derived from our CASE III, see Section~\ref{sec:s2}.
%%Section~\ref{sec:conc} concludes the article.

The rest of this article is organized as follows. In Section~\ref{sec:prel}, some basic definitions concerning (generalized) bent functions are given. A new convenient formula for computing the generalized Walsh-Hadamard spectra of $f:\mathbb{Z}^n_2\rightarrow \mathbb{Z}_q$ in terms of the spectral values of its component functions is derived in Section~\ref{sec:GWHT}. In Section~\ref{sec:suffic}, a set of sufficient conditions on the Walsh spectra of the Boolean component functions of $f$, ensuring that $f$ is gbent, are specified. It turns out that in some particular cases these conditions are also necessary, but whether these sufficient conditions are also necessary, in general, is left as an open problem. The problem of designing gbent functions, satisfying the set of sufficient conditions introduced previously, is addressed in Section~\ref{sec:conditions} one trivial method for this purpose are given. The task of selecting the component functions, that satisfy the set of sufficient conditions, in a non-trivial way appears to be rather difficult.
% coefficients $c_1$ and $c_2$ in terms of
% the generalized bentness for a function $f(x)=c_1a(x)+c_2b(x)$, where $f :\mathbb{Z}_2^n \rightarrow \mathbb{Z}_q$, are deduced in Section~\ref{sec:s2}, along with the conditions on the functions $a$ and $b$. The conditions  on the restrictions of a gbent function $f$ (thus viewing $f$ as a concatenation of functions from a smaller variable space) are analyzed in Section~\ref{sec:restric}. In Section~\ref{con}, two construction methods for generalized bent functions are presented and it is shown that the previous approaches  can be treated as special cases in our more general framework.
 Some concluding remarks are given in  Section~\ref{sec:concl}.
%\textbf{Need starting paragraph here! Whole text under is copied from other articles. It may be modified a bit, just not to be "the same".}\\

\section{Preliminaries}\label{sec:prel}
We denote the set of integers, real numbers and complex numbers by $\mathbb{Z},$ $\mathbb{R}$ and $\mathbb{C}$, respectively, and  the ring of integers modulo $r$ is denoted by $\mathbb{Z}_r.$ The vector space $\mathbb{Z}_2^n$ is the space of all $n$-tuples $x=(x_1,\ldots,x_n)$, where $x_i \in \mathbb{Z}_2$ with the standard operations.  For $x=(x_1,\ldots,x_n)$ and $y=(y_1,\ldots,y_n)$  in $\mathbb{Z}^n_2$, the scalar (or inner) product over $\mathbb{Z}_2$ is defined as $x\cdot y=x_1 y_1\oplus\cdots\oplus x_n y_n.$ The same inner product of two vectors $x, y \in \mathbb{Z}_q$, when defined modulo $q$, will be denoted by ``$\odot$'', thus  $x\odot y=x_1 y_1+ \cdots + x_n y_n \pmod{q}.$ The addition over $\mathbb{Z},$ $\mathbb{R}$ and $\mathbb{C}$ is denoted by ``+", but also the addition modulo $q$  and it should be understood from the context when reduction modulo $q$ is performed.  The  binary addition  over $\mathbb{Z}_2$ is denoted by $\oplus$ in a few cases when we use this addition. The cardinality  of the set $S$ is denoted by $|S|$. If $z=u+vi\in \mathbb{C},$ then $|z|=\sqrt{u^2+v^2}$ denotes the absolute value of $z,$ and $\overline{z}=u-vi$ denotes the complex conjugate of $z,$ where $i^2=-1,$ and $u,v\in \mathbb{R}.$ We also denote $u=\mathfrak{Re}(z)$ and $v=\mathfrak{Im}(z).$

The set of all Boolean functions in $n$ variables, that is the mappings from $\mathbb{Z}_2^n$ to $\mathbb{Z}_2$ is denoted by $\mathcal{B}_n$. Especially, the set of affine functions in $n$ variables we define as $\mathcal{A}_n=\{g(x)=a\cdot x\oplus b\;|\;a\in\mathbb{Z}_2^n,\; b\in\{0,1\}\}.$
The \emph{Walsh-Hadamard transform} (WHT) of $f\in\mathcal{B}_n$ at any point $\omega\in\mathbb{Z}^n_2$ is defined by $$W_{f}(\omega)=2^{-\frac{n}{2}}\sum_{x\in \mathbb{Z}^n_2}(-1)^{f(x)\oplus \omega\cdot x}.$$ A function $f\in\mathcal{B}_n,$ where $n$ is even, is called {\em bent} if and only if $|W_f(\omega)|=1$ for all $\omega\in\mathbb{Z}^n_2$, and these functions only exist  for $n$ even. If $n$ is odd, a function $f\in\mathcal{B}_n$ is said to be semibent if $W_f(\omega)\in\{0,\pm \sqrt{2}\}$, for every $\omega\in \mathbb{Z}_2.$ We call a function from $\mathbb{Z}^n_2$ to $\mathbb{Z}_q$ ($q\geq 2$ a positive integer) a \emph{generalized Boolean function} in $n$ variables \cite{Tok}. We denote the set of such functions by $\mathcal{GB}^n_q$ and for  $q=2$ the classical Boolean functions in $n$ variables are obtained.

Let $\zeta=e^{2\pi i/q}$ be a complex $q$-primitive root of unity. The {\em generalized Walsh-Hadamard transform} (GWHT) of $f\in \mathcal{GB}^q_n$ at any point $\omega\in \mathbb{Z}^n_2$ is the complex valued function $$\mathcal{H}_{f}(\omega)=2^{-\frac{n}{2}}\sum_{x\in \mathbb{Z}^n_2}\zeta^{f(x)}(-1)^{\omega\cdot x}.$$
A function $f\in \mathcal{GB}^q_n$ is called \emph{generalized bent (gbent)} function if $|\mathcal{H}_f(\omega)|=1$, for all $\omega\in \mathbb{Z}^n_2.$  If $q=2$, we obtain the (normalized) Walsh transform $W_f$ of $f\in\mathcal{B}_n$. Two n-variable Boolean functions $f,g\in \mathcal{B}_n$ are said to
be disjoint spectra functions if $W_f(\omega)W_g(\omega)=0,$ for every $\omega\in \mathbb{Z}^n_2$ \cite{CCA}.

A $(1,-1)$-matrix $H$ of order $p$ is called a \emph{Hadamard} matrix if  $HH^{T}=pI_p,$ where $H^{T}$ is the transpose of $H$, and $I_p$ is the $p\times p$ identity matrix. A special kind of  Hadamard matrix is the \emph{Sylvester-Hadamard} or \emph{Walsh-Hadamard} matrix, denoted by $H_{2^{k}},$ which is constructed using Kronecker product $H_{2^{k}}=H_2\otimes H_{2^{k-1}},$ where
\begin{eqnarray*}\label{HM}
H_1=(1);\hskip 0.4cm H_2=\left(
                           \begin{array}{cc}
                             1 & 1 \\
                             1 & -1 \\
                           \end{array}
                         \right);\hskip 0.4cm H_{2^k}=\left(
      \begin{array}{cc}
        H_{2^{k-1}} & H_{2^{k-1}} \\
        H_{2^{k-1}} & -H_{2^{k-1}} \\
      \end{array}
    \right).
\end{eqnarray*}
%The determinant of the Sylvester-Hadamard matrix $H$ of order $p$ is given by $det(H_r)=\pm r^{\frac{r}{2}}.$
%Some basic well known properties of Hadamard matrices are given by the following lemma.
%\begin{lemma}\label{propH}
%Let $H_p$ be an Hadamard matrix of order $p.$ Then:
%\begin{enumerate}[1.]
%\item $det(H_p)=\pm p^{\frac{p}{2}}.$
%\item $HH^{T}=H^{T}H.$
%\item Hadamard matrices may be changed into other Hadamard matrices by permuting rows and columns and by multiplying rows and columns by $-1.$ Matrices obtained using these methods are referred to as $H$-equivalent. Not all Hadamard matrices of the same order are $H$-equivalent.
%\item Every Hadamard matrix is $H$-equivalent to an Hadamard matrix which has every element of its first row and column equal to $+1$. Matrices of this form are called normalized.
%\item If $H_{4p}$ is a normalized Hadamard matrix of order $4p$, then every row (column), except the first, has $2p$ minus ones and $2p$ plus ones in each row %(column).
%\item The order of an Hadamard matrix is $1,$ $2,$ or $4p,$ where $p$ is a positive integer.
%\end{enumerate}
%\end{lemma}
For a function $g$  on $\mathbb{Z}^n_2,$ the $(1,-1)$-sequence defined by $((-1)^{g(v_0)},(-1)^{g(v_1)},\ldots,(-1)^{g(v_{2^{n}-1})})$ is called the \emph{sequence} of $g$, where $v_i=(v_{i,0},\ldots,v_{i,n-1}),$ $i=0,1,\ldots,2^{n}-1,$ denotes the vector in $\mathbb{Z}^n_2$ whose integer representation is $i$, that is, $i=\sum_{j=0}^{n-1}v_{i,j} 2^j$.
We take that $\mathbb{Z}^n_2$ is ordered as $$\{(0,0,\ldots,0),(1,0,\ldots,0),,(0,1,\ldots,0),\ldots,(1,1,\ldots,1)\},$$
and the vector $v_i=(v_{i,0},\ldots,v_{i,n-1}) \in \mathbb{Z}^n_2$ is uniquely identified by $i \in \{0,1,\ldots,2^n-1\}$.%, i.e., $i=\sum_{j=0}^{n-1}v_{i,j} 2^j$.

%A function $f\in \mathcal{GB}^q_n$ is \emph{generalized bent (gbent)} function if $|H_f(\omega)|=1$ for all $\omega\in \mathbb{Z}^n_2.$
%When $q=2,$ then $f$ is bent if and only if $W_f(\omega)=\pm 1$ for any $\omega \in \mathbb{Z}_2^n$, and these functions only exist  for $n$ even.
 %If $n$ is odd, a function $f\in \mathcal{B}_n$ is said to be \emph{semibent} if and only if $|W_f(\omega)|\in \{0,\sqrt{2}\},$ for all $\omega\in \mathbb{Z}^n_2.$
%Suppose $f\in \mathcal{GB}^q_n$ is a gbent function such that for every such $\omega,$ we have $H_f(\omega)=\zeta^{k_u},$ for some $0\leq k_u<q.$ Then, for such gbent function $f$, there is a function $F:\mathbb{Z}^n_2\rightarrow \mathbb{Z}_q$ such that $\zeta^F=H_f.$ \textcolor[rgb]{0.98,0.00,0.00}{We call such a function $F$ the dual of $f$ (Caution: only some gbent functions admits duals). }
%The sum $$\mathcal{C}_{f,g}(z)=\sum_{x\in \mathbb{Z}^n_2}\zeta^{f(x)-g(x\oplus z)}$$ is the \emph{crosscorrelation} of $f$ and $g$ at $z.$ The \emph{autocorrelation} of $f\in \mathcal{GB}^q_n$ at $z\in \mathbb{Z}^n_2$ is $\mathcal{C}_{f,f}(z)$ above, which we denote by $\mathcal{C}_f(z).$

\section{Motivation and Conjecture on GWHT}\label{sec:GWHT}

In this section, we recall some results related to quaternary and octal gbent functions \cite{Tok,Octal} in  terms of GWHT. The necessary and sufficient conditions for gbent property derived in \cite{Tok,Octal} for $q=4$ and $q=8$ motivates us to conjecture that similar sufficient conditions are valid for arbitrary even $q$, which is then proved in Section~\ref{sec:conditions}. Notice that proving the necessity of these conditions  turns out   to be hard though there are certain indications that the sufficient conditions given in Theorem~\ref{mainth2} are also necessary.

If $2^{h-1}<q\leq 2^h,$ to any generalized function $f:\mathbb{Z}^n_2\rightarrow \mathbb{Z}_q,$ we may associate a unique sequence of Boolean functions $a_i\in \mathcal{B}_n$ ($i=0,1,\ldots,h-1$) such that
\begin{eqnarray}\label{eq:1}
f(x)=a_0(x)+2a_1(x)+2^2a_2(x)+\ldots+2^{h-1}a_{h-1}(x),\; \forall x\in \mathbb{Z}^n_{2}.
\end{eqnarray}
The functions $a_i(x),$ $i=0,1,\ldots,h-1,$ are called the component functions of the function $f(x).$ When $q=4$ it was shown that the function $f(x)=a_0(x)+2a_1(x),$ $a_0,a_1\in \mathcal{B}_n,$ is gbent if and only if $a_1(x)$ and $a_0(x)\oplus a_1(x)$ are bent Boolean functions \cite{Tok}. Note that the last condition implies  that $a_0(x)$ is not necessarily bent (it can be affine for instance), and consequently  only $a_1(x)$ needs to be bent. In addition, the GWHT of the function $f$ in this case is expressed in  terms of the WHT transforms of the functions $a_1(x)$ and $a_0(x)\oplus a_1(x),$ i.e., we have
\begin{eqnarray*}
\mathcal{H}_f(u)=\frac{1}{2}[(W_{a_1}(u)+W_{a_0\oplus a_1}(u))+i(W_{a_1}(u)-W_{a_0\oplus a_1}(u))],\; \forall u\in \mathbb{Z}^n_{2}.
\end{eqnarray*}
However, we may rewrite this equality so that we view $\mathcal{H}_f$ as a linear combination of $W_{a_1}$ and $W_{a_0\oplus a_1},$ where the coefficients are complex numbers, that is,
\begin{eqnarray}\label{eq:2}
\mathcal{H}_f(u)=\left(\frac{1}{2}+i\right)W_{a_1}(u)+\left(\frac{1}{2}-i\right)W_{a_0\oplus a_1}(u).
\end{eqnarray}
%Note that the function $a_1$ had coefficient $2$ in $f=a_0+2a_1,$ and as a consequence we have that $W_{a_1}$ appears in $\mathcal{H}_f$ In addition, the function $a_1$ appears in WT of the function $a_0\oplus a_1.$

In the case when $q=8$, for $f\in\mathcal{GB}^8_n$ given by
\begin{eqnarray}\label{eq:3}
f(x)=a_0(x)+2a_1(x)+2^2a_2(x),
\end{eqnarray}
the GWHT of $f$ is given by the following lemma.
\begin{lemma}\cite{BentGbent,Octal}\label{lemma1}
Let $f\in \mathcal{GB}^8_n$ as in (\ref{eq:3}). Then,
\begin{eqnarray}\label{eq:4}
4\mathcal{H}_f(u)=\alpha_0 W_{a_2}(u)+\alpha_1 W_{a_0\oplus a_2}(u)+\alpha_2 W_{a_1\oplus a_2}(u)+\alpha_3 W_{a_0\oplus a_1\oplus a_2}(u),
\end{eqnarray}
where $\alpha_0=1+(1+\sqrt{2})i$, $\alpha_1=1+(1-\sqrt{2})i,$ $\alpha_2=1+\sqrt{2}-i,$ $\alpha_3=1-\sqrt{2}-i.$
\end{lemma}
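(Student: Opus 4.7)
The plan is to expand $\zeta^{f(x)}$ explicitly as a linear combination of the four signs $(-1)^{a_{i_1}\oplus a_{i_2}\oplus\cdots}$ corresponding to every subset of $\{a_0,a_1,a_2\}$ containing $a_2$, and then recognize the resulting $x$-sums as Walsh transforms. The key observation is that $\zeta=e^{2\pi i/8}$ satisfies $\zeta^2=i$ and $\zeta^4=-1$, so from $f=a_0+2a_1+4a_2$ we obtain the factorization
\[
\zeta^{f(x)}=\zeta^{a_0(x)}\,i^{a_1(x)}\,(-1)^{a_2(x)}.
\]

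Next I would linearize each Boolean exponent by the identity $\eta^{b}=\tfrac{1+\eta}{2}+\tfrac{1-\eta}{2}(-1)^b$, valid for any $b\in\{0,1\}$ and any $\eta\in\mathbb{C}$. Applied with $\eta=\zeta$ and $\eta=i$, this gives
\[
\zeta^{f(x)}=\Bigl(\tfrac{1+\zeta}{2}+\tfrac{1-\zeta}{2}(-1)^{a_0(x)}\Bigr)\Bigl(\tfrac{1+i}{2}+\tfrac{1-i}{2}(-1)^{a_1(x)}\Bigr)(-1)^{a_2(x)}.
\]
Expanding the product produces four terms, whose signs are precisely $(-1)^{a_2}$, $(-1)^{a_0\oplus a_2}$, $(-1)^{a_1\oplus a_2}$ and $(-1)^{a_0\oplus a_1\oplus a_2}$, multiplied respectively by the scalars $\tfrac14(1+\zeta)(1+i)$, $\tfrac14(1-\zeta)(1+i)$, $\tfrac14(1+\zeta)(1-i)$ and $\tfrac14(1-\zeta)(1-i)$.

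Substituting into the definition of $\mathcal{H}_f(u)=2^{-n/2}\sum_x \zeta^{f(x)}(-1)^{u\cdot x}$ and interchanging sum with the (finite) linear combination, the four $x$-sums become exactly $W_{a_2}(u)$, $W_{a_0\oplus a_2}(u)$, $W_{a_1\oplus a_2}(u)$ and $W_{a_0\oplus a_1\oplus a_2}(u)$. Multiplying both sides by $4$ yields formula \eqref{eq:4} with coefficients
\[
(1+\zeta)(1+i),\quad (1-\zeta)(1+i),\quad (1+\zeta)(1-i),\quad (1-\zeta)(1-i).
\]

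The only remaining task is a brief arithmetic check that these four complex numbers equal $\alpha_0,\alpha_1,\alpha_2,\alpha_3$ in the stated order. Using $\zeta=\tfrac{1+i}{\sqrt 2}$ one computes $\zeta+i\zeta=\sqrt{2}\,i$ and $\zeta-i\zeta=\sqrt 2$, which immediately give $(1+\zeta)(1+i)=1+(1+\sqrt 2)i$, $(1-\zeta)(1+i)=1+(1-\sqrt 2)i$, $(1+\zeta)(1-i)=1+\sqrt 2-i$ and $(1-\zeta)(1-i)=1-\sqrt 2-i$. There is no real obstacle here; the only thing to be careful about is matching the coefficient of $(1\pm\zeta)(1\mp i)$ to the correct Walsh spectrum, since the factor $(1\pm\zeta)/2$ comes from the $a_0$-linearization while $(1\pm i)/2$ comes from the $a_1$-linearization, so the signs inside the XORs must be tracked consistently.
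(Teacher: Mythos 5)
Your proof is correct, and it reaches the stated coefficients by a genuinely different route from the paper. The paper obtains Lemma~\ref{lemma1} as an instance of its general machinery (Theorem~\ref{mainth}, worked out in Example~\ref{Bful}): it posits $\zeta^{a_0+2a_1}=\sum_{i=0}^{3}\alpha_i\Theta_i(x)$, observes that evaluating over all four values of $(a_0(x'),a_1(x'))$ forces the linear system $H_{2^2}\Lambda=B$ with $B=[1,\zeta,\zeta^2,\zeta^3]^T$, and inverts the Sylvester--Hadamard matrix to get $\Lambda=2^{-2}H_{2^2}B$. You instead factor $\zeta^{f(x)}=\zeta^{a_0(x)}\,i^{a_1(x)}\,(-1)^{a_2(x)}$ and linearize each Boolean exponent via $\eta^{b}=\tfrac{1+\eta}{2}+\tfrac{1-\eta}{2}(-1)^{b}$, so the four Walsh transforms and their coefficients $(1\pm\zeta)(1\pm i)/4$ fall out of a direct product expansion with no matrix inversion. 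The two computations are secretly the same --- your factor-by-factor linearization is exactly the Kronecker-product structure $H_{2^2}=H_2\otimes H_2$ that makes the paper's inversion work --- but your version is more elementary and self-contained for this one lemma, while the paper's buys a formula $\alpha_i=2^{-h}H^{(i)}_{2^h}B$ valid for arbitrary $h$ and arbitrary even $q$ (including $q\neq 2^h$, where the coefficients $c_i$ are no longer all powers of two and a clean multiplicative factorization of $\zeta^{f(x)}$ is not available). Your arithmetic checks out: $\zeta(1+i)=\sqrt{2}\,i$ and $\zeta(1-i)=\sqrt{2}$ do give $\alpha_0,\ldots,\alpha_3$ in the stated order, matched to the correct Walsh spectra.
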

\begin{rem}\label{rem:triv}
A special case of selecting $a_0(x)=0$ appears to be interesting. In the first place, the condition relating the Walsh coefficients becomes simpler, that is, $$4\mathcal{H}_f(u)=2(1+i) W_{a_2}(u)+2(1-i) W_{a_1\oplus a_2}(u), \; \forall u\in \mathbb{Z}^n_{2}.$$
Then, assuming further that $a_1(x)=0$ would actually give $4\mathcal{H}_f(u)=4W_{a_2}(u)$, meaning that we only have one bent  function and that the function $f(x)=4a_2(x)$ is gbent though its codomain only takes the values from the set $\{0,4\}$. In general, any function defined as $f(x)=\frac{q}{2}a(x)$ is gbent if and only if $a(x)$ is a bent function.
\end{rem}
\begin{rem} Apart form the trivial case discussed in Remark \ref{rem:triv}, we may also consider other suitable choices for the component functions $a_{0}, a_{1}$ and $a_{2}$. Fixing $a_{2}$ to be bent we may consider  $a_0, a_{1} \in \mathcal{A}_n$ to be suitably chosen affine functions so that the above conditions are satisfied. Indeed, since $a_{2}$ being bent implies that the addition of any affine function to it does not affect the bent property we can assume that $a_i \in \mathcal{A}_n$ for $i=0,1$.
It is well-known that for $a_i(x)=a_{i,0} +a_{i,1}x_1 + \ldots + a_{i,n}x_n$, if the Walsh transform of $f(x)$ at point $u$ is $W_f(u)$ then the transform of $f(x)+a_i(x)$ at point $u$ is $(-1)^{a_{i,0}}W_f(u + a^{(i)})$, where $a^{(i)} \in \Z_2^n$ is given as $a^{(i)}=(a_{i,1}, \ldots, a_{i,n})$. Hence, (\ref{eq:4})
%the above equation
%$$4\mathcal{H}_f(u)=\alpha_0 W_{a_2}(u)+\alpha_1 W_{a_0\oplus a_2}(u)+\alpha_2 W_{a_1\oplus a_2}(u)+\alpha_3 W_{a_0\oplus a_1\oplus a_2}(u),$$
can be rewritten as,
$$4\mathcal{H}_f(u)=\alpha_0 W_{a_2}(u)+\alpha_1 (-1)^{a_{0,0}}W_{a_2}(u + a^{(0)})+\alpha_2 W_{a_1\oplus a_2}(u)+\alpha_3 (-1)^{a_{0,0}}W_{ a_1\oplus a_2}(u + a^{(0)}).$$
%{\bf Does this lead to some relaxed conditions ? What kind of affine functions we can select here so that the conditions work ? This can be easily tested by computer and then we may develop it further}

\end{rem}
Notice that $\mathcal{H}_f$ in (\ref{eq:4}) is again a linear combination of the WHTs of the functions $a_2(x),$ $a_0(x)\oplus a_2(x)$, $a_1(x)\oplus a_2(x)$, $a_0(x)\oplus a_1(x)\oplus a_2(x)$.
%In addition, notice that WT of the function $a_2$, which had coefficient $2^2$ in (\ref{eq:3}), appears alone in (\ref{eq:4}). In this case, coefficients $\alpha_i,$ $i=0,1,2,3,$ are not conjugate.
Moreover, the following theorem imposes the conditions for the function $f \in \mathcal{GB}^8_n$ to be a gbent function.
\begin{theo}\cite{BentGbent}\label{theo1}
Let $f\in \mathcal{GB}^8_n$ as in (\ref{eq:3}). Then:
\begin{enumerate}[1)]\label{th1}
\item If $n$ is even, then $f$ is generalized bent if and only if $a_2,$ $a_0\oplus a_2$, $a_1\oplus a_2$, $a_0\oplus a_1\oplus a_2$ are all bent, and
$$(*)\; W_{a_0\oplus a_2}(u)W_{a_1\oplus a_2}(u)=W_{a_2}(u)W_{a_0\oplus a_1\oplus a_2}(u), \;\; \textnormal{for all } u\in \mathbb{Z}^n_2;$$
\item If $n$ is odd, then $f$ is generalized bent if and only if $a_2,$ $a_0\oplus a_2$, $a_1\oplus a_2$, $a_0\oplus a_1\oplus a_2$ are semi-bent satisfying $$(**): W_{a_0\oplus a_2}(u)=W_{a_2}(u)=0 \;\;\; \wedge \;\;\;|W_{a_1\oplus a_2}(u)|=|W_{a_0\oplus a_1 \oplus a_2}(u)|=\sqrt{2}; \; \textnormal{ or}$$ $$W_{a_1\oplus a_2}(u)=W_{a_0\oplus a_1\oplus a_2}(u)=0 \;\;\; \wedge \; \;\; |W_{a_0\oplus a_2}(u)|=|W_{a_2}(u)|=\sqrt{2},$$
for all $u\in \mathbb{Z}^n_2.$
\end{enumerate}
\end{theo}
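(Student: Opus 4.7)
The plan is to square the formula in Lemma~\ref{lemma1}, split the resulting identity into rational and $\sqrt{2}$-parts, and then invoke Jacobi's four-square count to pin down the integer structure of the component Walsh spectra. Setting $b_0 = a_2$, $b_1 = a_0\oplus a_2$, $b_2 = a_1\oplus a_2$, $b_3 = a_0\oplus a_1\oplus a_2$ and $W_i := W_{b_i}(u)$, I would expand $(4\mathcal{H}_f(u))\overline{(4\mathcal{H}_f(u))}$ using $|\alpha_0|^2=|\alpha_2|^2=4+2\sqrt 2$, $|\alpha_1|^2=|\alpha_3|^2=4-2\sqrt 2$, and observe that among the six cross-products $\alpha_j\bar{\alpha}_k$ only $\alpha_0\bar{\alpha}_3=-2\sqrt 2$ and $\alpha_1\bar{\alpha}_2=2\sqrt 2$ contribute a non-zero real part. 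Collecting terms yields
\[ |4\mathcal{H}_f(u)|^2 = 4\sum_{i=0}^{3} W_i^2 + 2\sqrt 2\bigl[(W_0^2+W_2^2)-(W_1^2+W_3^2)+2(W_1 W_2 - W_0 W_3)\bigr]. \]

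Since every $W_i\in 2^{-n/2}\mathbb Z$, the products $W_jW_k$ are rational, so by $\mathbb{Q}$-linear independence of $\{1,\sqrt 2\}$ the equation $|4\mathcal{H}_f(u)|^2=16$ splits into the pair, valid at every $u\in\mathbb Z_2^n$:
\[ \text{(I)}\ \sum_{i=0}^{3}W_i^2=4, \qquad \text{(II)}\ (W_0^2+W_2^2)-(W_1^2+W_3^2)+2(W_1W_2-W_0W_3)=0. \]
Sufficiency in both parts of the theorem is then immediate: with $W_i^2=1$ for all $i$, (I) is trivial and (II) collapses to $(*)$; in each sub-case of $(**)$ exactly two squared values equal $2$ and the other two vanish, from which (I) and (II) are verified by direct inspection.

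For necessity, which is the hard part, the plan is to analyse the integer solutions of (I). Setting $k_i:=2^{n/2}W_i\in\mathbb Z$ rewrites it as $k_0^2+k_1^2+k_2^2+k_3^2=2^{n+2}$. By Jacobi's four-square formula $r_4(m)=8\sum_{d\mid m,\, 4\nmid d}d$, we have $r_4(2^{n+2})=24$, and a direct enumeration shows: for $n$ even the $24$ representations partition into the $8$ ``concentrated'' solutions $(\pm 2^{n/2+1},0,0,0)$ up to index permutation, together with the $16$ ``balanced'' solutions $(\pm 2^{n/2},\pm 2^{n/2},\pm 2^{n/2},\pm 2^{n/2})$; for $n$ odd, since $2^{(n+2)/2}$ is irrational the concentrated type cannot occur and all $24$ representations have the shape $(\pm 2^{(n+1)/2},\pm 2^{(n+1)/2},0,0)$ up to permutation.

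Condition (II) is then used to prune these lists at each $u$. A short case-check shows that each concentrated solution forces (II) to equal $\pm 4\neq 0$, so for $n$ even only the balanced solutions survive, meaning each $b_i$ is bent at every $u$; for such spectra (II) reduces to the identity $W_1W_2=W_0W_3$, which is precisely $(*)$. For $n$ odd, evaluating (II) on each of the six possible locations of the non-zero pair shows that it vanishes exactly when the non-zero pair is $\{W_0,W_1\}$ or $\{W_2,W_3\}$, matching the two sub-cases of $(**)$. The main obstacle I expect is the bookkeeping around Jacobi's count and the exhaustion of the casework in (II); conceptually, however, the combination of (I) and (II) together with the integrality of the $k_i$ forces the rigid Walsh-spectrum structure claimed in the theorem with no slack remaining.
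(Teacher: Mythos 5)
Your proof is correct, and I checked the computations: the only pairs $(j,k)$ with $\mathfrak{Re}(\alpha_j\overline{\alpha_k})\neq 0$ are indeed $(0,3)$ and $(1,2)$ with values $\mp 2\sqrt{2}$, the split into the rational condition (I) and the $\sqrt{2}$-condition (II) is legitimate because all $W_jW_k$ lie in $2^{-n}\mathbb{Z}$, Jacobi's count $r_4(2^{n+2})=8(1+2)=24$ is right, and your two families ($8+16$ for $n$ even, $6\cdot 4$ for $n$ odd) exhaust that count, so the pruning by (II) does force exactly $(*)$ and $(**)$. Be aware, however, that the paper contains no proof of Theorem \ref{theo1} at all: it is imported from \cite{BentGbent,Octal}, and the authors explicitly describe the original $q=8$ derivation as a nontrivial Jacobi-type counting argument that they could not extend to larger $q$ --- your argument is essentially that one. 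What this paper does instead is prove only the sufficiency direction, for all even $q$, via Theorem \ref{mainth2}: there one writes $2^{2p}|\mathcal{H}_f(u)|^2=\sum_k S_k^2+2\sum_k\cos\frac{2\pi k}{q}\sum_i S_iS_{i+k}$ with $S=H_{2^p}W$ and uses orthogonality of Hadamard rows to kill the cross terms; Section \ref{eqforms} then shows the Hadamard-row condition $(\triangle)$/$(\square)$ is equivalent to $(*)$/$(**)$ when $q=8$. The trade-off is clear: your route buys the converse, but its engine --- the rigidity of representing $2^{n+2}$ as a sum of exactly four squares, together with a single irrationality $\sqrt{2}$ --- is special to $h=3$; for $h>3$ the analogue of (I) involves $2^{h-1}$ squares with many more representations and the analogue of (II) splits over several independent irrational directions, which is precisely why necessity remains open in the paper.
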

In general, a formula which gives the GWHT of the function $f$ given by (\ref{eq:1}) is given by the following theorem.
\begin{theo}\cite{BentGbent,Octal}
The Walsh-Hadamard transform of $f:\mathbb{Z}^n_2\rightarrow \mathbb{Z}_q$, $2^{h-1}<q\leq 2^h,$ where $f(x)=\sum^{h-1}_{i=0}a_i(x)2^i,$ $a_i\in\mathcal{B}_n$ is given by
\begin{eqnarray}\label{eq:6}
\mathcal{H}_f(u)=2^{-h}\sum_{I\subseteq \{0,\ldots,h-1\}}\zeta^{\sum_{i\in I}2^i}\sum_{J\subseteq I, K\subseteq \overline{I}}(-1)^{|J|}W_{\sum_{t\in J\cup K}a_{t}(x)}(u).
\end{eqnarray}
\end{theo}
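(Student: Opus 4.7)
The plan is to derive the formula by expanding $\zeta^{f(x)}$ as a linear combination of the sign functions $(-1)^{a_t(x)}$ and then recognizing each resulting character sum as a Walsh transform. The starting observation is that because each $a_i(x)$ takes values in $\{0,1\}$, we may use the indicator identity
\begin{equation*}
\zeta^{f(x)}=\sum_{I\subseteq\{0,\ldots,h-1\}}\zeta^{\sum_{i\in I}2^i}\prod_{i\in I}a_i(x)\prod_{i\notin I}(1-a_i(x)),
\end{equation*}
which simply decomposes the sum according to which components $a_i(x)$ equal $1$.

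Next I would rewrite $a_i(x)=(1-(-1)^{a_i(x)})/2$ and $1-a_i(x)=(1+(-1)^{a_i(x)})/2$, so that
\begin{equation*}
\prod_{i\in I}a_i(x)\prod_{i\notin I}(1-a_i(x)) = 2^{-h}\prod_{i\in I}\bigl(1-(-1)^{a_i(x)}\bigr)\prod_{i\notin I}\bigl(1+(-1)^{a_i(x)}\bigr).
\end{equation*}
Expanding the two products over $J\subseteq I$ and $K\subseteq\overline{I}$ (the disjointness of $J$ and $K$ being automatic), and using $(-1)^{a_{t_1}(x)}\cdots(-1)^{a_{t_k}(x)}=(-1)^{a_{t_1}(x)\oplus\cdots\oplus a_{t_k}(x)}$, I obtain
\begin{equation*}
\zeta^{f(x)}=2^{-h}\sum_{I\subseteq\{0,\ldots,h-1\}}\zeta^{\sum_{i\in I}2^i}\sum_{\substack{J\subseteq I\\ K\subseteq \overline{I}}}(-1)^{|J|}(-1)^{\sum_{t\in J\cup K}a_t(x)}.
\end{equation*}

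The proof then finishes by multiplying both sides by $2^{-n/2}(-1)^{u\cdot x}$, summing over $x\in\mathbb{Z}_2^n$, and interchanging the finite sums so that the inner sum $2^{-n/2}\sum_x(-1)^{\sum_{t\in J\cup K}a_t(x)\oplus u\cdot x}$ is identified as $W_{\sum_{t\in J\cup K}a_t}(u)$, yielding the claimed expression. There is no real obstacle; the only thing to handle carefully is the bookkeeping of the subset indices $I,J,K$ and verifying that $J$ and $K$ are automatically disjoint (since $K\subseteq\overline{I}$ and $J\subseteq I$), which is what makes the symbolic sum $\sum_{t\in J\cup K}a_t$ unambiguous.
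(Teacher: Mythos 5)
Your derivation is correct. The indicator decomposition $\zeta^{f(x)}=\sum_{I}\zeta^{\sum_{i\in I}2^i}\prod_{i\in I}a_i(x)\prod_{i\notin I}(1-a_i(x))$ is valid because for each fixed $x$ exactly one subset $I$ (namely $\{i: a_i(x)=1\}$) contributes, the substitutions $a_i=(1-(-1)^{a_i})/2$ and $1-a_i=(1+(-1)^{a_i})/2$ are exact on $\{0,1\}$, and expanding the two products over $J\subseteq I$ and $K\subseteq\overline{I}$ produces precisely the sign $(-1)^{|J|}$ and the character $(-1)^{\sum_{t\in J\cup K}a_t(x)}$ appearing in (\ref{eq:6}); interchanging the finite sums with the sum over $x$ then identifies each inner sum as a normalized Walsh transform. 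Note that this theorem is quoted from \cite{BentGbent,Octal} and the paper supplies no proof of it, so there is nothing to match your argument against line by line; it is, however, worth contrasting your route with the machinery the paper develops in Section~\ref{sec:proof1} for its own (equivalent, but reorganized) formula (\ref{eq:Wi_lin}). There, instead of expanding indicator products, the authors treat the unknown coefficients $\alpha_i$ in $\zeta^{f(x)}=\sum_i\alpha_i\Theta_i(x)$ as the solution of the linear system $H_{2^h}\Lambda=B$ and invert the Sylvester--Hadamard matrix. Your expansion is more elementary and directly yields the subset-indexed form (\ref{eq:6}), at the price of a triple sum over $I,J,K$ that obscures which Walsh coefficients actually occur and with what weights; the paper's matrix formulation collapses the same information into $2^h$ explicitly computable coefficients $\alpha_i=2^{-h}H^{(i)}_{2^h}B$, which is what makes the later sufficiency analysis tractable.
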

This implicit expression does not reveal the fact that $\mathcal{H}_f$ of a function $f$ represented as in  (\ref{eq:1}) can be given explicitly as a linear combination (with complex coefficients that can be efficiently computed) of the WHTs of some linear combinations of its component functions $a_i(x)$, $i=0,1,\ldots, h-1.$
Therefore, for an arbitrary generalized Boolean function $f$ given by (\ref{eq:1}), it is of great importance to develop a more useful formula for its GWHT which will be given in the next section.

%\textcolor[rgb]{0.98,0.00,0.00}{In GWHT formula given above, we can see that $\mathcal{H}_f$ can be represented over WHT of some linear combinations of component functions $a_i\in \mathcal{B}_n$. However, for an arbitrary generalized Boolean function $f$ given by (\ref{eq:1}), it would be of great importance to develop a GWHT formula which will more efficiently represent $\mathcal{H}_f$ over WHT of some linear combinations of functions $a_i$.}
 
Before we state our conjecture regarding the GWHT and the conditions ($*$)-($**$) in general, we first formalize our observations. Let $\Theta_i(x)$ be the function defined as
\begin{eqnarray}\label{theta-sec1.1}
\Theta_{i}(x)=(-1)^{z_{i,0} a_0(x)\oplus z_{i,1} a_1(x)\oplus \ldots\oplus z_{i,h-1}a_{h-1}(x)},
\end{eqnarray}
where $z_i=(z_{i,0},z_{i,1},\ldots,z_{i,h-1})\in \mathbb{Z}^{h}_2$  and $i$ denotes its integer representation, $i=0,\ldots,2^h-1$.
\begin{rem}\label{rem-sec1}
Note that the  function $\Theta_{i}(x)$ actually gives $(-1)$ powered to all possible linear combinations of  the component functions $a_0(x),a_1(x),\ldots,a_{h-1}(x)$. In addition, we always have $\zeta^{\frac{q}{2}a_{h-1}(x)}=(-1)^{a_{h-1}(x)}$ for $q=2^h$.
\end{rem}
For  $q=8=2^3$, thus $h=3$, let us consider $f:\mathbb{Z}^n_2\rightarrow \mathbb{Z}_8$  given by (\ref{eq:3}). Since $\zeta^{4a_2(x)}=(-1)^{a_2(x)},$ the GWHT is given as:
\begin{eqnarray}\label{eq:7}
\mathcal{H}_f(u)=\sum_{x\in \mathbb{Z}^n_2}\zeta^{f(x)}(-1)^{u\cdot x}=\sum_{x\in \mathbb{Z}^n_2}\zeta^{a_0(x)+2a_1(x)}(-1)^{a_{2}(x)\oplus u\cdot x}.
\end{eqnarray}
Hence, for $q=8$ we have $z=(z_0,z_1)\in\mathbb{Z}^{2}_2$, $\Theta_{z}(x)=(-1)^{z_0 a_0(x)\oplus z_1 a_1(x)}$, where
\begin{eqnarray}\label{theta8} \nonumber
\Theta_{0}(x)=\Theta_{(0,0)}(x)&=&1,  \\ \nonumber
\Theta_{1}(x)=\Theta_{(1,0)}(x)&=&(-1)^{a_0(x)}, \\
\Theta_{2}(x)=\Theta_{(0,1)}(x)&=&(-1)^{a_1(x)}, \\ \nonumber
\Theta_{3}(x)=\Theta_{(1,1)}(x)&=&(-1)^{a_0\oplus a_1(x)},   \nonumber
\end{eqnarray}
and
\begin{eqnarray*}
\zeta^{a_0(x)+2a_1(x)}=2^{-2}(\alpha_0\Theta_{0}(x)+\alpha_1\Theta_{1}(x)+\alpha_2\Theta_{2}(x)+\alpha_3\Theta_{3}(x)),
\end{eqnarray*}
where $\alpha_i$ are given in Lemma \ref{lemma1}.
Regarding the GWHT of the function $f$ given by (\ref{eq:1}), we propose the following conjecture.
\begin{conj}\label{conj}
Let $f\in \mathcal{GB}^q_n$ and $\Theta_i(x)$ be given by (\ref{eq:1}) and (\ref{theta-sec1.1}), respectively. Then $\zeta^{f(x)}$ can be represented as a linear combination of functions $\Theta_{i}(x),$ $i=0,1,\ldots,2^{h}-1,$ where the coefficients $\alpha_i$ are complex numbers, i.e.,
\begin{eqnarray}\label{conj-1}
\zeta^{f(x)}=\zeta^{\sum^{h-1}_{i=0}a_i(x)2^i}=\sum^{2^h-1}_{i=0}\alpha_i \Theta_{i}(x).
\end{eqnarray}
Furthermore, for a given $f \in \mathcal{GB}^q_n$ the coefficients $\alpha_i$ can be computed efficiently.
\end{conj}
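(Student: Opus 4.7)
The plan is to observe that $\zeta^{f(x)}$ depends on $x$ only through the $h$-tuple $(a_0(x),\ldots,a_{h-1}(x))\in \mathbb{Z}_2^h$, and then apply the standard Fourier expansion on the group $\mathbb{Z}_2^h$. Concretely, I would define $F:\mathbb{Z}_2^h\to \mathbb{C}$ by
$$F(b_0,\ldots,b_{h-1})=\zeta^{b_0+2b_1+\cdots+2^{h-1}b_{h-1}}$$
(when $q<2^h$, this formula is taken as the \emph{definition} on the full cube, and it agrees with $\zeta^{f(x)}$ at every tuple $(a_0(x),\ldots,a_{h-1}(x))$ that actually arises). By the decomposition (\ref{eq:1}), for every $x\in \mathbb{Z}_2^n$ we have $\zeta^{f(x)}=F(a_0(x),\ldots,a_{h-1}(x))$.

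Now the characters $\mathbf{b}\mapsto (-1)^{\mathbf{z}\cdot \mathbf{b}}$ with $\mathbf{z}\in \mathbb{Z}_2^h$ form an orthogonal basis of $\mathbb{C}^{\mathbb{Z}_2^h}$, so $F$ has a unique expansion
$$F(\mathbf{b})=\sum_{\mathbf{z}\in \mathbb{Z}_2^h}\alpha_{\mathbf{z}}\,(-1)^{\mathbf{z}\cdot \mathbf{b}}, \qquad \alpha_{\mathbf{z}}=2^{-h}\sum_{\mathbf{b}\in \mathbb{Z}_2^h}F(\mathbf{b})\,(-1)^{\mathbf{z}\cdot \mathbf{b}}.$$
Identifying each $\mathbf{z}=(z_{i,0},\ldots,z_{i,h-1})$ with its integer representation $i\in\{0,\ldots,2^h-1\}$ and then substituting $\mathbf{b}=(a_0(x),\ldots,a_{h-1}(x))$ into the expansion turns $(-1)^{\mathbf{z}\cdot \mathbf{b}}$ into precisely the function $\Theta_i(x)$ defined in (\ref{theta-sec1.1}). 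This yields the claimed identity $\zeta^{f(x)}=\sum_{i=0}^{2^h-1}\alpha_i\,\Theta_i(x)$, and recovers the expressions already computed by hand for $q=4,8$ in (\ref{eq:2}) and Lemma~\ref{lemma1} as special cases.

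For the efficiency claim, the inversion formula $\alpha_{\mathbf{z}}=2^{-h}\sum_{\mathbf{b}} F(\mathbf{b})(-1)^{\mathbf{z}\cdot\mathbf{b}}$ is the (scaled) Walsh-Hadamard transform of the vector $(F(\mathbf{b}))_{\mathbf{b}\in \mathbb{Z}_2^h}\in \mathbb{C}^{2^h}$, which can be carried out via the recursive Sylvester--Hadamard structure of $H_{2^h}$ in $O(h\cdot 2^h)$ operations, requiring only $2^h$ evaluations of powers of $\zeta$ — a computation depending on $h$ alone, not on the ambient dimension $n$. There is no real obstacle in proving the conjecture itself: existence, uniqueness and efficient computability of the $\alpha_i$ all follow immediately from the standard Fourier analysis on $\mathbb{Z}_2^h$. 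The genuinely hard part — which is not part of the conjecture but rather the content of the following sections — is extracting a sufficiently clean description of the $\alpha_i$ (or of the associated sums over characters) to read off sufficient, and ideally necessary, gbent conditions from the resulting formula for $\mathcal{H}_f$.
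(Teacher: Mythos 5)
Your proof is correct and is essentially the same argument as the paper's: the character expansion of $F$ on $\mathbb{Z}_2^h$ with inversion formula $\alpha_{\mathbf z}=2^{-h}\sum_{\mathbf b}F(\mathbf b)(-1)^{\mathbf z\cdot\mathbf b}$ is exactly the paper's linear system $H_{2^h}\Lambda=B$ solved via $\Lambda=2^{-h}H_{2^h}B$, since the rows of the Sylvester--Hadamard matrix are precisely those characters and the entries of $B$ are your values $F(\mathbf b)$. Your phrasing via Fourier analysis on $\mathbb{Z}_2^h$ is cleaner but mathematically identical, including the implicit extension of $F$ to tuples not attained by $(a_0(x),\ldots,a_{h-1}(x))$.
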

Note that Conjecture \ref{conj} covers all the  values of even $q$ in the range  $q \in (2^{h-1}, 2^{h}]$. Clearly, in the case when $q=8=2^h$ (similarly when $q=4$), we had that $\zeta^{a_0(x)+2a_1(x)+4a_2(x)}=(-1)^{a_2(x)}\zeta^{a_0(x)+2a_1(x)},$ and consequently we represent only $\zeta^{a_0(x)+2a_1(x)}$ as a linear combination of functions $\Theta_{i}(x),$ $i=0,1,2,3.$
This representation is proved useful later for deriving sufficient conditions of gbent property and for  generalizing  Theorem \ref{theo1} but covering all values of $q$, where $q$ is even.

\subsection{New GWHT formula}\label{sec:proof1}

In this section, we prove Conjecture \ref{conj} and consequently  a new GWHT formula for any generalized function $f\in \mathcal{GB}^q_n$, which computes $\mathcal{H}_f$ by using the Walsh spectral values of the component functions and the coefficients $\alpha_i$, is derived.

Let $f:\mathbb{Z}^n_2\rightarrow \mathbb{Z}_q$, $2^{h-1}<q\leq 2^h,$ where again $f(x)=a_0(x)+2a_1(x)+\ldots+
2 ^{h-1}a_{h-1}(x)$, $a_i(x) \in \mathcal{B}_n$. For convenience, we introduce the coefficients $c_i=2^{i}$, for $i=0,\ldots,h-1$, thus writing $f(x) =\sum_{i=0}^{h-1}c_ia_i(x)$. Notice that whatever formal representation of $f$ is used (see also Example~\ref{ex:alphai}), once the function $f$ has been specified in terms of its input and output values, the decomposition into the Boolean component function $a_i(x)$ as given above is unique and any other representation can be transformed into this form.

%   $c_i\in \mathbb{Z}_q$ are different and, without a loss of generality, $b_i(x)$ are different and non zero functions. Clearly, if some $b_i(x)$ are zero functions, then we just have less functions which we can relabel. Note that any generalized function can be written in this form,  \textcolor[rgb]{0.98,0.00,0.00}{ and additionally this representation also covers the cases when some of $a_i(x)$ in (\ref{eq:1}) is zero or linear combination of some other component functions.} Hence, the form (\ref{general}) means also that the equal component functions are summarized. Comparing to the form (\ref{eq:1}), we will have that $b_i(x)=a_i(x)$ if $c_i=2^i,$ $i=0,1,\ldots,2^h-1.$

Assume now that the function $f$ can be represented as a linear combination of the functions $\Theta_{i}(x)$ as in (\ref{conj-1}), that is,
\begin{eqnarray}\label{p1}
\zeta^{f(x)}=\zeta^{\sum^{h-1}_{i=0}c_i a_i(x)}=\sum^{2^h-1}_{i=0}\alpha_i \Theta_{i}(x),
\end{eqnarray}
for some complex numbers $\alpha_i\in \mathbb{C}$ and $\Theta_i(x)=(-1)^{z_{i,0} a_0(x)\oplus \cdots\oplus z_{i,h-1}a_{h-1}(x)},$ as given by (\ref{theta-sec1.1}).
%Note that here $$\Theta_{i}(x)=(-1)^{z_{i,0} b_0(x)\oplus z_{i,1} b_1(x)\oplus \ldots\oplus z_{i,d-1}b_{d-1}(x)},$$ where $z_i=(z_{i,0},z_{i,1},\ldots,z_{i,d-1})\in \mathbb{Z}^{d}_2.$
 The main task is to find the coefficients $\alpha_i$ such that (\ref{p1}) holds for every $x\in \mathbb{Z}^n_2.$

Consider an arbitrary but fixed $x'\in \mathbb{Z}^n_2$ such that $(a_0(x'),\ldots,a_{h-1}(x'))=z_k\in\mathbb{Z}^h_2,$ where $k$ is the integer representation of a binary vector $z_k.$ To relate the functions $\Theta_i$ to the rows (columns) of the  Hadamard matrix we need the following useful identification. It is well-known that the rows of the Hadamard matrix $H_{2^h}$ of size $2^h \times 2^h$ are the evaluations of all  linear functions in $\mathcal{B}_h$, that is, the $k$-th row of $H_{2^h}$ (alternatively the $k$-th column since $H_{2^h}=H^T_{2^h}$) can be expressed as $H_{2^h}^{(k)}=\{(-1)^{z_k \cdot y} \mid y \in \mathbb{Z}_2^h \}$, where $z_k$ is fixed. Therefore,

% Due to a exponential definition of the Hadamard matrix (Ahmed and Rao \cite{Ahmed}), it is clear that the vector composed of functions $\Theta_i(x')$, $i=0,1,\ldots,2^d-1,$ is equal to $k$-th row (or the $k$-th column since $H_{2^d}=H^T_{2^d}$) of the Hadamard matrix $H_{2^d}$, denoted by $H^{(k)}_{2^d}$, i.e., we have
\begin{eqnarray*}\label{seq}
(\Theta_{0}(x'),\Theta_{1}(x'),\ldots,\Theta_{2^h-1}(x'))=H^{(k)}_{2^h}.
\end{eqnarray*}
Indeed, for a fixed $x' \in \mathbb{Z}_2^n$ the value of a binary vector $(a_0(x'),\ldots,a_{h-1}(x'))=z_k$ is also fixed and it is easy to verify that,
%Equivalently, since $(b_0(x'),b_1(x'),\ldots,b_{d-1}(x'))=z_k$, the definition of functions $\Theta_i(x)$ implies that  $(\Theta_{0}(x'),\Theta_{1}(x'),\ldots,\Theta_{2^d-1}(x'))$ is a sequence of the linear function $l_{z_k}(x)=z_k\cdot x,$ where $x\in \mathbb{Z}^d_2$ (see for instance \cite[Lemma 10]{Seberry}) i.e.,
$$(\Theta_{0}(x'),\Theta_{1}(x'),\ldots,\Theta_{2^h-1}(x'))=((-1)^{z_k\cdot z_0},(-1)^{z_k\cdot z_1},\ldots,(-1)^{z_k\cdot z_{2^h-1}})=H^{(k)}_{2^h},$$
where $z_0,z_1,\ldots,z_{2^h-1}$ are elements of the set $\mathbb{Z}^n_2.$ 
%Clearly, if for some $x''\in \mathbb{Z}^n_2$ holds $(b_0(x''),b_1(x''),\ldots,b_{d-1}(x''))=z_v\in\mathbb{Z}^d_2,$ then the vector composed of functions $\Theta_i(x'')$, $i=0,1,\ldots,2^d-1,$ is equal to $H^{(v)}_{2^d}$, since  it is a sequence of the linear function $l_{z_v}(x)=z_v\cdot x,$ where $x\in \mathbb{Z}^d_2$, and so on.
Furthermore, for this particular (but arbitrary) value $x'$ the fact that $(a_0(x'),\ldots,a_{h-1}(x'))=z_k$ implies that
\begin{eqnarray}\label{exponent}
\zeta^{f(x')}=\zeta^{\sum^{h-1}_{i=0}c_i a_{i}(x')}=\zeta^{z_k\odot (c_0,\ldots,c_{h-1})}.
\end{eqnarray}
Now, if we define the column matrix $\Lambda=[\alpha_i]^{2^h-1}_{i=0}$ to be a matrix of the coefficients $\alpha_i,$ the previous discussion together with (\ref{p1}) implies that
$$H^{(k)}_{2^h}\left(
    \begin{array}{c}
      \alpha_0 \\
       \alpha_1 \\
      \vdots \\
       \alpha_{2^{h}-1} \\
    \end{array}
  \right)_{2^h \times 1}=H^{(k)}_{2^h}\Lambda=\zeta^{z_k\odot (c_0,\ldots,c_{h-1})}.$$
%Obviously, for an arbitrary $x\in \mathbb{Z}^n_2$, the Hadamard row, which corresponds to the vector $z=(a_0(x),\ldots,a_{h-1}(x))\in \mathbb{Z}^{h}_2$, multiplied with the matrix $X$ must be equal to $\zeta^{z\odot (c_0,\ldots,c_{h-1})}$, as discussed above.
Notice that when $z_k$ goes through $\mathbb{Z}^h_2$ the value $z_k \odot (c_0,\ldots,c_{h-1})$ goes through $\mathbb{Z}_{q}$, since the operation $\odot$ means cutting by modulo $q$. Therefore,
% for an arbitrary but fixed $x' \in \mathbb{Z}^h_2$ such that $(a_0(x'),\ldots,a_{h-1}(x'))=z_k\in\mathbb{Z}^h_2$,
 it is convenient to  define a column matrix $B$ as a matrix of all corresponding powers of $\zeta$, that is, $B=[\zeta^{z_i\odot (c_0,\ldots,c_{h-1})}]^{2^h-1}_{i=0}$ or given in the matrix form as,
\begin{eqnarray}\label{BX}
%X=\left(
%    \begin{array}{c}
%      \alpha_0 \\
%      \alpha_1 \\
%      \vdots \\
%      \alpha_{2^h-1} \\
%    \end{array}
%  \right),\;\;\;
   B=\left(
                     \begin{array}{c}
                       \zeta^0 \\
                       \zeta^{c_0} \\
                       \vdots \\
                       \zeta^{c_0+\cdots+c_{2^h-1}} \\
                     \end{array}
                   \right).
\end{eqnarray}

%  imposed by the set of all vectors $(a_0(x),a_1(x),\ldots,b_{h-1}(x))$, when $x$ goes through the space $\mathbb{Z}^h_2,$
  and obviously assuming (\ref{p1}) is valid the following system of equations must be satisfied
\begin{eqnarray}\label{sis}
H_{2^h}\Lambda=B.
\end{eqnarray}

As mentioned previously, the function $f \in  \mathcal{GB}^q_n$  may be given  in  different forms, for instance $f(x)=\sum_{i=0}^d c_ib_i(x)$, where $b_i \in \mathcal{B}_n$ but $c_i \in \mathbb{Z}_q$ and in general $c_i \neq 2^i$. Nevertheless, one can easily transform such a function into the form discussed above.
Note that the solution $\Lambda$ of the system (\ref{sis}) implies that the equality (\ref{p1}) holds for any $x\in \mathbb{Z}^n_2.$ The main reason for this is the fact that the Hadamard matrix covers all possible values of the vector $(\Theta_0(x),\Theta_1(x),\ldots,\Theta_{2^h-1}(x))$.
Therefore, for any $x \in \mathbb{Z}^n_2$ the evaluation  of the component functions $(a_0(x),\ldots,a_{h-1}(x))$ implies that the corresponding Hadamard row multiplied with  $\Lambda$ will always be equal to the corresponding power of $\zeta.$

Since the determinant of the Sylvester-Hadamard matrix is given as $\det(H_{2^h})=\pm 2^{h2^{h-1}}$, using the fact that $H^{-1}_{2^h}=2^{-h}H^{T}_{2^h}$ ($H_{2^h}$ is symmetric), we have that the unknown column matrix $\Lambda=[\alpha_i]^{2^{h}-1}_{i=0}$  is (uniquely) given by
\begin{eqnarray}\label{X}
\Lambda=H_{2^h}^{-1}B=2^{-h}H^{T}_{2^h}B=2^{-h}H_{2^h}B.
\end{eqnarray}
In the following example, we illustrate a complete procedure of finding $\alpha_i$ with respect to both discussed representations of the function $f(x)$.
\begin{ex}\label{ex:alphai}
Let us consider  $f(x)=2b_0(x)+3b_1(x)$, for $q=6$. Since $2^2<q\leq 2^3$ then $h=3$, and $f(x)$ can be rewritten in the form (\ref{eq:1})  as
$$f(x)=b_1(x)+2(b_0(x)+b_1(x))+4\cdot 0,$$
where we now identify $a_0(x)=b_1(x)$, $a_1(x)=b_0(x)+b_1(x)$, and $a_2(x)=0$.
Considering the system $H_{2^3}\Lambda=B,$ where $B=[\zeta^k]^{2^3-1}_{k=0},$ we have that the matrix $\Lambda=[\alpha_i]^{2^3-1}_{i=0}$ is given as $\Lambda=2^{-3}H_{2^3}B,$ i.e.,
\begin{eqnarray*}
 \Lambda=\left(
    \begin{array}{c}
      \alpha_0 \\
      \alpha_1 \\
     \alpha_2 \\
      \alpha_3 \\
      \alpha_4 \\
      \alpha_5 \\
      \alpha_6 \\
      \alpha_7 \\
    \end{array}
  \right)=2^{-3}\left(
            \begin{array}{c}
              \frac{3}{2}+i \frac{\sqrt{3}}{2} \\
              \frac{1}{2}-i\frac{\sqrt{3}}{2} \\
             \frac{3}{2}-i\frac{3\sqrt{3}}{2}\\
              -\frac{3}{2}-i\frac{\sqrt{3}}{2}\\
              -\frac{3}{2}+i\frac{3\sqrt{3}}{2} \\
              \frac{3}{2}+i \frac{\sqrt{3}}{2} \\
              \frac{9}{2}+i \frac{3\sqrt{3}}{2} \\
              \frac{3}{2}-i \frac{3\sqrt{3}}{2} \\
            \end{array}
          \right)
\end{eqnarray*}
In addition, from $\Theta_i(x)=(-1)^{z_{i,0}a_0(x)\oplus z_{i,1}a_1(x)\oplus z_{i,3}\cdot 0}$, $z_i\in \mathbb{Z}^3_2$ we have:
\begin{eqnarray*}
\Theta_0(x)=\Theta_4(x)=1,\;\; \Theta_1(x)=\Theta_5(x)=(-1)^{a_0(x)},\\
 \Theta_2(x)=\Theta_6(x)=(-1)^{a_1(x)},\;\; \Theta_3(x)=\Theta_7(x)=(-1)^{a_0(x)\oplus a_1(x)}.
\end{eqnarray*}
Hence, we have the following calculation:
\begin{eqnarray}\label{zeta1}\nonumber
\zeta^{f(x)}&=&\sum^{2^3-1}_{i=0}\alpha_i\Theta_i(x)=(\alpha_0+\alpha_4)\Theta_0(x)+(\alpha_1+\alpha_5)\Theta_1(x)+(\alpha_2+\alpha_6)\Theta_2(x)+(\alpha_3+\alpha_7)\Theta_2(x)\\ \nonumber
&=&2^{-3}(2\sqrt{3}i\Theta_0(x)+2\Theta_1(x)+6\Theta_2(x)+(-2\sqrt{3}i)\Theta_2(x))\\
&=&2^{-3}(2\sqrt{3}i+2(-1)^{a_0(x)}+6(-1)^{a_1(x)}-2\sqrt{3}i(-1)^{a_0(x)\oplus a_1(x)}).
\end{eqnarray}
Since $a_0(x)=b_1(x)$ and $a_1(x)=b_0(x)+b_1(x)$, for all values of the component functions $b_0(x)$ and $b_1(x)$ we have that $\zeta^{f(x)}$ takes the following values:
\begin{eqnarray*}\label{izraz1}
\zeta^{f(x)}&=&2^{-3}(2\sqrt{3}i+2(-1)^{a_0(x)}+6(-1)^{a_1(x)}-2\sqrt{3}i(-1)^{a_0(x)\oplus a_1(x)}) \\
&=&\left\{\begin{array}{cc}
                                                                                    1, & (b_0(x),b_1(x))=(0,0) \\
                                                                                    \zeta^2=-\frac{1}{2}+i\frac{\sqrt{3}}{2}, & (b_0(x),b_1(x))=(1,0) \\
                                                                                   \zeta^3=-1, & (b_0(x),b_1(x))=(0,1) \\
                                                                                    \zeta^5=\frac{1}{2}-i\frac{\sqrt{3}}{2}, & (b_0(x),b_1(x))=(1,1)
                                                                                  \end{array}
\right..
\end{eqnarray*}
\end{ex}
%\begin{rem}
%In the provided example and discussion above, it is clear that we can obtain equivalent (equal) decompositions of $\zeta^{f(x)}$, regardless the form of the function $f(x)$ we consider, (\ref{eq:1}) or (\ref{general}). In other words, regardless whether there are equal of zero functions in (\ref{eq:1}), setting the system $H_{2^h}X=B$ we always get a decomposition of $\zeta^{f(x)}$ which is valid. In the rest of this section, we consider the system (\ref{sis}), i.e., the form (\ref{general}) of the function $f(x).$
%\end{rem}
Hence, from (\ref{X}) we have $\alpha_i=2^{-h}H^{(i)}_{2^h}B,$ for $i=0,\ldots,2^h-1,$ and together with (\ref{p1}) we have that the GWHT is given as
\begin{equation}\label{eq:Wideri}
\mathcal{H}_f(u)=\sum_{x\in \mathbb{Z}^n_2}\zeta^{f(x)}(-1)^{u\cdot x}=\sum_{x\in \mathbb{Z}^n_2}\Big((-1)^{u\cdot x}\sum^{2^h-1}_{i=0}\alpha_i \Theta_i(x)\Big)=\sum^{2^h-1}_{i=0}\alpha_i W_i(u),\hskip 0.5cm \forall u\in\mathbb{Z}^n_2,
\end{equation}
where \begin{equation} \label{eq:W_i}
W_i(u)=\sum_{x\in \mathbb{Z}^n_2}\Theta_i(x)(-1)^{u\cdot x}=\sum_{x\in \mathbb{Z}^n_2}(-1)^{z_{i,0}a_0(x)\oplus \cdots \oplus z_{i,h-1}a_{h-1}(x) \oplus u\cdot x},
\end{equation} i.e., $W_i(u)$ is the WHT of the function $z_{i,0}a_0(x)\oplus \cdots \oplus z_{i,h-1}a_{h-1}(x)$ at point $u\in \mathbb{Z}^n_2$, where $z_i=(z_{i,0},\ldots,z_{i,h-1})\in\mathbb{Z}^{h}_2,$ $i=0,\ldots,2^{h}-1.$ Now we state the main result of this section.

\begin{theo}\label{mainth}
Let $f:\mathbb{Z}^n_2\rightarrow \mathbb{Z}_q$, $2^{h-1}<q\leq 2^h,$ where $f(x)$ is given by (\ref{eq:1}). Let the function $\Theta_i(x)$ be defined by (\ref{theta-sec1.1}), and let $W_i(u)$ denote the WHT of the Boolean function $z_{i,0}a_0(x)\oplus \cdots \oplus z_{i,h-1}a_{h-1}(x)$ at point $u\in \mathbb{Z}^n_2$ as in (\ref{eq:W_i}), for $i=0,\ldots,2^{h}-1.$ Then:
 \begin{enumerate}[1.]
\item $\zeta^{f(x)}$ can be represented as a linear combination of the functions $\Theta_{i}(x),$
\begin{eqnarray*}\label{zeta}
\zeta^{f(x)}=\zeta^{\sum^{h-1}_{i=0}c_ia_i(x)}=\sum^{2^h-1}_{i=0}\alpha_i \Theta_{i}(x),
\end{eqnarray*}
where $\alpha_i$ are given by
\begin{eqnarray*}\label{alpha}
\alpha_i=2^{-h}H^{(i)}_{2^h}B,
\end{eqnarray*}
and the matrix $B$ is given by (\ref{BX}).
\item Consequently, $\mathcal{H}_f(u)$ can be represented as a linear combination of $W_i(u),$ i.e., \begin{equation}\label{eq:Wi_lin}
\mathcal{H}_f(u)=\sum^{2^h-1}_{i=0}\alpha_i W_i(u),\hskip 0.5cm \forall u\in\mathbb{Z}^n_2.
\end{equation}
\end{enumerate}
\end{theo}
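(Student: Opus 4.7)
My plan is to formalize the argument laid out in the discussion immediately preceding the theorem. I would fix an arbitrary $x' \in \mathbb{Z}_2^n$ and let $k$ denote the integer representation of the binary vector $z_k = (a_0(x'), \ldots, a_{h-1}(x')) \in \mathbb{Z}_2^h$. The first key step is to identify the vector $(\Theta_0(x'), \Theta_1(x'), \ldots, \Theta_{2^h-1}(x'))$ with the $k$-th row $H^{(k)}_{2^h}$ of the Sylvester-Hadamard matrix; this follows directly from the definition (\ref{theta-sec1.1}) together with the standard interpretation of the rows of $H_{2^h}$ as evaluations of the characters $y \mapsto (-1)^{z_k \cdot y}$ on $\mathbb{Z}_2^h$. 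Dually, $\zeta^{f(x')} = \zeta^{z_k \odot (c_0, \ldots, c_{h-1})}$ is precisely the $k$-th entry of the column vector $B$ defined in (\ref{BX}).

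Evaluating the target identity $\zeta^{f(x')} = \sum_{i=0}^{2^h-1} \alpha_i \Theta_i(x')$ at $x'$ therefore reduces to the single scalar equation $H^{(k)}_{2^h}\Lambda = B_k$. As $x'$ varies over $\mathbb{Z}_2^n$, the corresponding index $k$ ranges over a (possibly strict) subset of $\{0, 1, \ldots, 2^h-1\}$, and it suffices to impose the full system $H_{2^h}\Lambda = B$. Using the symmetry $H_{2^h}^T = H_{2^h}$ and the well-known inversion formula $H_{2^h}^{-1} = 2^{-h}H_{2^h}$, this system admits the unique solution $\Lambda = 2^{-h}H_{2^h}B$, which upon extracting the $i$-th coordinate yields $\alpha_i = 2^{-h}H^{(i)}_{2^h}B$ and establishes part~1.

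Part~2 then follows by a one-line substitution into the definition of the GWHT and interchanging the two finite sums:
\begin{eqnarray*}
\mathcal{H}_f(u) = \sum_{x\in\mathbb{Z}_2^n} \zeta^{f(x)}(-1)^{u\cdot x} = \sum_{i=0}^{2^h-1} \alpha_i \sum_{x\in\mathbb{Z}_2^n} \Theta_i(x)(-1)^{u\cdot x} = \sum_{i=0}^{2^h-1}\alpha_i W_i(u),
\end{eqnarray*}
where the definition (\ref{eq:W_i}) is applied in the last step. I do not anticipate a genuine obstacle here, as the argument is purely linear-algebraic; the only mildly subtle point worth addressing explicitly is that solving the full $2^h \times 2^h$ Hadamard system is legitimate even when the attained set of $k$ values is a proper subset of $\{0, \ldots, 2^h-1\}$. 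The extra equations over non-attained indices do not introduce any inconsistency because the Hadamard matrix is invertible, and the resulting closed-form $\alpha_i$ remain compatible with all attained rows; hence they yield a valid representation of $\zeta^{f(x)}$ for every $x \in \mathbb{Z}_2^n$.
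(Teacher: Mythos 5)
Your proposal is correct and follows essentially the same route as the paper: identify $(\Theta_0(x'),\ldots,\Theta_{2^h-1}(x'))$ with the row $H^{(k)}_{2^h}$ indexed by $z_k=(a_0(x'),\ldots,a_{h-1}(x'))$, impose the full system $H_{2^h}\Lambda=B$, invert via $H_{2^h}^{-1}=2^{-h}H_{2^h}$, and then interchange sums to get the GWHT formula. Your explicit remark that solving the full system remains legitimate even when the attained indices $k$ form a proper subset of $\{0,\ldots,2^h-1\}$ is a welcome clarification of a point the paper only addresses in passing.
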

For instance, Lemma~\ref{lemma1} is an easy corollary of the above result as illustrated in the following example.
\begin{ex}\label{Bful}
Let $q=8=2^h,$ thus $h=3$, and consider an arbitrary function $f \in \mathcal{GB}^q_n$  given by $f(x)=a_0(x)+2a_1(x)+4a_2(x),$ $f:\mathbb{Z}^n_2\rightarrow \mathbb{Z}_2$. Then, the GWHT of $f$ at some arbitrary point $u \in \mathbb{Z}^n_2$
%the function $f(x)=a_0(x)+2a_1(x)+4a_2(x),$ $f:\mathbb{Z}^n_2\rightarrow \mathbb{Z}_2,$
is given by
\begin{eqnarray*}\label{eq:ex1}
\mathcal{H}_f(u)=\sum_{x\in \mathbb{Z}^n_2}(-1)^{a_{h-1}(x)\oplus u\cdot x}\zeta^{\sum^{h-2}_{i=0}a_i(x)2^i}=\sum_{x\in \mathbb{Z}^n_2}(-1)^{a_{2}(x)\oplus u\cdot x}\zeta^{a_0(x)+2a_1(x)}.
\end{eqnarray*}
Now we would like to represent $\zeta^{a_0(x)+2a_1(x)}$ as a linear combination of functions $\Theta_{0}(x)=1,$ $\Theta_{1}(x)=(-1)^{a_0(x)}$, $\Theta_{2}(x)=(-1)^{a_1(x)}$ and  $\Theta_{3}(x)=(-1)^{a_0(x)+a_1(x)},$ i.e.,
$$\zeta^{a_0(x)+2a_1(x)}=\alpha_0\Theta_{0}(x)+\alpha_1\Theta_{1}(x)+\alpha_2\Theta_{2}(x)+\alpha_3\Theta_{3}(x),$$
where the coefficients $\alpha_i\in\mathbb{C},$ $i=0,1,2,3.$ For such coefficients, all of the following equalities must be true:
\begin{eqnarray*}
\zeta^{a_0(x)+2a_1(x)}=\left\{\begin{array}{cc}
                                1=\alpha_0+\alpha_1+\alpha_2+\alpha_3, & if\;\; (a_0(x'),a_1(x'))=(0,0) \\
                                \zeta^{1}=\alpha_0-\alpha_1+\alpha_2+\alpha_3, & if\;\; (a_0(x'),a_1(x'))=(1,0) \\
                                \zeta^{2}=\alpha_0+\alpha_1-\alpha_2+\alpha_3, & if\;\; (a_0(x'),a_1(x'))=(0,1) \\
                               \zeta^{3}=\alpha_0-\alpha_1-\alpha_2+\alpha_3, & if\;\; (a_0(x'),a_1(x'))=(1,1)
                              \end{array}
\right.,
\end{eqnarray*}
for any input $x'\in \mathbb{Z}^n_2.$
%Note that the coefficients of the equalities above (on the right side) are values of functions $\Theta_i(x'),$ for the corresponding values of the component functions. Hence, coefficients $\alpha_i$ must cover all possible cases, i.e., all possible values of the component functions.
 By Theorem \ref{mainth}, we have $\Lambda=2^{-2}H_{2^2}B$ is given by
\begin{eqnarray*}
\Lambda=\left(
    \begin{array}{c}
      \alpha_0 \\
      \alpha_1 \\
      \alpha_2 \\
      \alpha_3 \\
    \end{array}
  \right)=2^{-2}\left(
                  \begin{array}{cccc}
                    1 & 1 & 1 & 1 \\
                    1 & -1 & 1 & -1 \\
                    1 & 1 & -1 & -1 \\
                    1 & -1 & -1 & 1 \\
                  \end{array}
                \right)\left(
                         \begin{array}{c}
                           1 \\
                           \frac{\sqrt{2}}{2}+i\frac{\sqrt{2}}{2} \\
                           i \\
                           -\frac{1}{\sqrt{2}}+i\frac{1}{\sqrt{2}} \\
                         \end{array}
                       \right)=2^{-2}\left(
                                 \begin{array}{c}
                                   1+(1+\sqrt{2})i \\
                                   1+(1-\sqrt{2})i \\
                                   1+\sqrt{2}-i \\
                                   1-\sqrt{2}-i \\
                                 \end{array}
                               \right).
\end{eqnarray*}
%Note that $z_0,z_1,z_2,z_3\in\mathbb{Z}^2_2=\{(0,0),(1,0),(0,1),(1,1)\},$ respectively.
Using $\Lambda$ we obtain Lemma \ref{lemma1}, since for every $u\in \mathbb{Z}^n_2$ we have
\begin{eqnarray*}\label{eq:ex3}
\mathcal{H}_f(u)&=&2^{-\frac{n}{2}}\sum_{x\in \mathbb{Z}^n_2}(-1)^{a_{2}(x)\oplus u\cdot x}\zeta^{a_0(x)+2a_1(x)}=\alpha_0 2^{-\frac{n}{2}}\sum_{x\in \mathbb{Z}^n_2}(-1)^{a_{2}(x)\oplus u\cdot x}+\\
&+&\alpha_1 2^{-\frac{n}{2}}\sum_{x\in \mathbb{Z}^n_2}(-1)^{a_0(x)\oplus a_{2}(x)\oplus u\cdot x}+\alpha_2 2^{-\frac{n}{2}}\sum_{x\in \mathbb{Z}^n_2}(-1)^{a_1(x)\oplus a_{2}(x)\oplus u\cdot x}\\
&+&\alpha_32^{-\frac{n}{2}}\sum_{x\in \mathbb{Z}^n_2}(-1)^{a_0(x)+a_1(x)+a_{2}(x)\oplus u\cdot x}\\
&=&\alpha_0 W_{a_2}(u)+\alpha_1 W_{a_0+a_2}(u)+\alpha_2 W_{a_1+a_2}(u)+\alpha_3 W_{a_0+a_1+a_2}(u).
\end{eqnarray*}
Note that in Lemma \ref{lemma1}, the common factor $2^{-2}$ of the coefficients $\alpha_i$ is moved to the left-hand side by considering $4\mathcal{H}_f(u)$ instead of $\mathcal{H}_f(u)$. Thus, the coefficients $\alpha_i$ above are identical to those in Lemma \ref{lemma1}.
%are are labeled to be the same as ours, just divided by $2^{-2}.$ For instance, in Lemma \ref{lemma1} we have $\alpha_0=1+(1+\sqrt{2})i,$ etc., but the common part $2^{-2}$ of all $\alpha_i$ is written on the left side of (\ref{eq:4}).
\end{ex}
%\textbf{Write a note somewhere that we obtain Lemma 3.1 by Theorem 4.1!!}

\section{Sufficient conditions for gbent property}\label{sec:suffic}

In this section, we analyze the conditions under which a generalized function $f\in \mathcal{GB}^q_n$ is gbent, where $n$ may be either even and odd. For even  $q$, we provide sufficient conditions for gbent property in terms of the component functions of $f$. In other words, for this case we give an efficient method for construction of gbent functions using Boolean functions.

Let $f:\mathbb{Z}^n_2\rightarrow \mathbb{Z}_q$ be given in the form (\ref{eq:1}), i.e., $f(x)=\sum^{h-1}_{i=0}a_i(x)2^i,$ and $q$ be even ($2^{h-1}<q\leq 2^h$). For the reasons explained below, we rewrite the function $f(x)$ as
\begin{eqnarray}\label{fform}
f(x)=\frac{q}{2}a(x)+a_0(x)+2a_1(x)+\ldots+2^{p-1}a_{p-1}(x),
\end{eqnarray}
for some $p\leq h-1,$ where $a,a_i\in \mathcal{B}_n$.
%In what follows, we discuss the form (\ref{fform}) given above.
We first notice that for $q=2^h$, by simply taking $p=h-1$, the above form is identical to (\ref{eq:1}) after identifying $a(x)=a_{h-1}(x)$.

% function in this form ($q$ even). % and the form itself must not be the same form as (\ref{eq:1}) or (\ref{general}).
%If we consider the form (\ref{eq:1}), the term $\frac{q}{2}a(x)$ appears in two cases: if some component functions are equal (for instance when $q=6$ and we have $a_0(x)+2a_0(x)=3a_0(x)=\frac{q}{2}a_0(x)$, where $a_1(x)=a_0(x)$), or if we consider the decomposition of component functions (for instance when $q=6$ and $a_1(x)=a_0(x)+c(x)$, then we have again the term $\frac{q}{2}a_0(x)$).

The importance of the term $\frac{q}{2}a(x)$ is due to the  fact that $\frac{q}{2}$ is the only coefficient from $\mathbb{Z}_q$ for which it holds that $\zeta^{\frac{q}{2}a(x)}=(-1)^{a(x)}.$ This coefficient, which naturally appears when $q=2^h$ as the coefficient of $a_{h-1}(x)$ in (\ref{eq:1}), actually made it possible to express the spectral values of the GWHT of $f$ in terms of certain linear combinations of $W_i$ as given by (\ref{eq:Wi_lin}). This was essentially achieved through an efficient manipulation of the double summation as it was done when deriving (\ref{eq:Wideri}).
 %Furthermore, the term $(-1)^{a(x)}$ together with Theorem \ref{mainth}-(2) will mean that the function $a(x)$ will be involved in all WHTs $W_i(u)$, for $i=0,1,2,\ldots, 2^p-1.$ More precisely, we have $W_0(u)=W_a(u),$ which allows us that we may require $W_0(u)=W_a(u)$ to take values $\pm 1,$ instead of having $W_0(u)=2^{-\frac{n}{2}}\sum_{x\in \mathbb{Z}^n_2}(-1)^{u\cdot x}\in \{0,2^{\frac{n}{2}}\}$ if there is no $\frac{q}{2}a(x)$ term. This is the main reason why we consider the form of the function $f(x)$ given by (\ref{fform}) instead of form (\ref{eq:1}).
 However, we still can not prove that $f$ must contain the term $\frac{q}{2}a(x)$ in this explicit form but assuming this form the derivation of the sufficient conditions when $q \neq 2^h$ becomes much easier.

% form of the function $f(x)$, whether we have a case of equal component functions, or the case of som decomposition of component functions. Clearly, if $q=2^h$ we have that form (\ref{eq:1}) is in fact the form (\ref{fform}) for $p=h-1,$ i.e., there exists the term $\frac{q}{2}a(x)=2^{h-1}a_{h-1}(x).$ In the case when $q$ is even and $q<2^h$, in our approach, we consider the term $\frac{q}{2}a(x)$ in form of the function $f(x),$ since we want to have $W_0(u)=W_a(u).$}

%\textcolor[rgb]{0.98,0.00,0.00}{Second, the main reason of having the parameter $p\leq h,$ instead of setting some fixed value, for instance $p=h-1$, whether we have equal component functions of decompositions, is the fact that we are able to do a decomposition of the component functions in many ways. For instance when $q=6,$ the function $f(x)=3b_1(x)+2b_0(x)$ ($p=2$) we may write as $f(x)=3b_3(x)+b_0(x)+4b_0(x)$ ($p=3$), where $b_1(x)=b_0(x)+b_3(x)$ (mod $q$), for some Boolean function $b_3\in \mathcal{B}_n.$ It means that we are able to write any generalized function in the form (\ref{fform}) for different values of $p.$ That flexibility of decomposing of component functions is denoted by the parameter $p$.}

Hence, using $\zeta^{\frac{q}{2}a(x)}=(-1)^{a(x)}$ and applying Theorem \ref{mainth}-(2) on $\zeta^{a_0(x)+2a_1(x)+\ldots+2^{p-1}a_{p-1}(x)},$ the GWHT at point $u\in\mathbb{Z}^n_2$ is given as:
\begin{eqnarray*}
\mathcal{H}_f(u)=\sum_{x\in \mathbb{Z}^n_2}(-1)^{a(x)\oplus u\cdot x}\sum^{2^{p}-1}_{i=0}\alpha_i \Theta_{i}(x)=\sum^{2^{p}-1}_{i=0}\alpha_i W_i(u),
\end{eqnarray*}
using the same approach as when deriving (\ref{eq:Wideri}).
 Here $W_i(u)$ is WHT at point $u\in\mathbb{Z}^n_2$ of functions $a(x)\oplus z_{i,0}a_0(x)\oplus  \cdots \oplus z_{i,p-1}a_{p-1}(x),$  $z_i=(z_{i,0},\ldots,z_{i,p-1})\in\mathbb{Z}^{p}_2,$ $i=0,\ldots,2^{p}-1.$

 Let us denote the elements of the $i$-th Hadamard row $H^{(i)}_{2^{p}}$ by $h_{i,k}$, $0\leq k, i\leq 2^{p}-1.$ Since the form (\ref{fform}) will impose the system $H_{2^{p}}\Lambda=B,$ where $B=[b_k]^{2^{p}-1}_{k=0}$  and $b_k=\zeta^k$, a  further calculation of GWHT at point $u\in\mathbb{Z}^n_2$ gives:
\begin{eqnarray*}
\mathcal{H}_f(u)&=&\sum^{2^{p}-1}_{i=0}\alpha_i W_i(u)=\sum^{2^{p}-1}_{i=0}\left(2^{-p}\sum^{2^{p}-1}_{k=0}h_{i,k} b_k\right) W_i(u)\\
&=&2^{-p}\sum^{2^{p}-1}_{k=0}\left(\sum^{2^{p}-1}_{i=0}h_{i,k} W_i(u)\right)b_k=2^{-p}\left(\sum^{2^{p}-1}_{k=0}S_k\cos\frac{2\pi k}{q}+i\sum^{2^{p}-1}_{k=0}S_k\sin\frac{2\pi k}{q}\right),
\end{eqnarray*}
%$\mathcal{H}_f(u)&=&\sum^{2^{p}-1}_{i=0}\alpha_i W_i(u)=\sum^{2^{p}-1}_{i=0}\left(2^{-p}\sum^{2^{p}-1}_{k=0}h_{i,k} b_k\right) W_i(u)\\
%&=&2^{-p}\sum^{2^{p}-1}_{i=0}\left(\sum^{2^{p}-1}_{k=0}(-1)^{z_k\cdot v_i}\cos\frac{2\pi k}{q}+i\sum^{2^{p}-1}_{k=0}(-1)^{z_k\cdot v_i}\sin\frac{2\pi %k}{q}\right)W_i(u)\\
%&=&2^{-p}\left(\sum^{2^{p}-1}_{k=0}\sum^{2^{p}-1}_{i=0}(-1)^{z_k\cdot v_i}W_i(u)\cos\frac{2\pi k}{q}+i\sum^{2^{p}-1}_{k=0}\sum^{2^{p}-1}_{i=0}(-1)^{z_k\cdot %v_i}W_i(u)\sin\frac{2\pi k}{q}\right)\\
%&=&2^{-p}\left(\sum^{2^{p}-1}_{k=0}S_k\cos\frac{2\pi k}{q}+i\sum^{2^{p}-1}_{k=0}S_k\sin\frac{2\pi k}{q}\right),$
where
\begin{eqnarray}\label{Sk}
S_k=\sum^{2^{p}-1}_{i=0}h_{i,k} W_i(u),\hskip 0.5cm k=0,\ldots,2^{p}-1,\; u\in\mathbb{Z}^n_2.
\end{eqnarray}
Defining the column matrices $W=[W_k]^{2^{p}-1}_{k=0}$ and $S=[S_k]^{2^{p}-1}_{k=0}$ we have $S=H_{2^{p}}W$ which in the matrix form is given as,
\begin{eqnarray}\label{Sk2}
W=\left(
                           \begin{array}{c}
                            W_0(u) \\
                           W_1(u) \\
                           \vdots \\
                           W_{2^{p}-1}(u) \\
                           \end{array}
                         \right)_{2^{p}\times 1},\hskip 0.7cm  S=\left(
    \begin{array}{c}
      S_0 \\
      S_1 \\
      \vdots \\
      S_{2^{p}-1} \\
    \end{array}
  \right)_{2^{p}\times 1}=\left(
    \begin{array}{c}
      H^{(0)}_{2^{p}}W \\
      H^{(1)}_{2^{p}}W \\
      \vdots \\
      H^{(2^{p}-1)}_{2^{p}}W \\
    \end{array}
  \right).
\end{eqnarray}
Consequently, we may write $\mathcal{H}_f(u)=2^{-p}(S^T B),$ where $B=[\zeta^k]^{2^{p}-1}_{k=0}$ and $S^T$ is the transpose of $S$. Note that both the matrix $S$ as well as $W$ depend on the input $u,$ and for every $k=0,\ldots,2^p-1,$ we have $S_k=H^{(k)}_{2^p}W,$ since $H_{2^p}$ is symmetric. A well-known property of a Hadamard matrix of the size $2^{p}$ is that  any two distinct rows are orthogonal, thus $\sum_k h_{ik} h_{jk} =0$ for $i \neq j$, and if $i=j$ then $\sum_k h_{ik} h_{jk} =2^{p}.$ The absolute value of $\mathcal{H}_f(u)$ is given as:
\begin{eqnarray}\label{abs}
2^{2p}|\mathcal{H}_f(u)|^2=\left(\sum^{2^{p}-1}_{k=0}S_k\cos\frac{2\pi k}{q}\right)^2+\left(\sum^{2^{p}-1}_{k=0}S_k\sin\frac{2\pi k}{q}\right)^2.
\end{eqnarray}
It is not difficult to see that (\ref{abs}) can be written as
\begin{eqnarray}\label{abs3}
2^{2p}|\mathcal{H}_f(u)|^2= \sum^{2^{p}-1}_{k=0} S^2_k + 2 \sum^{2^{p}-1}_{k=1} \cos \frac{2\pi k}{q}\sum^{2^{p}-1-k}_{\substack{i=0}}S_i S_{i+k}.
\end{eqnarray}
\begin{theo}\label{mainth2}
Let $f:\mathbb{Z}^n_2\rightarrow \mathbb{Z}_q$,
%$2^{h-1}<q\leq 2^h$ be even,
where $f(x)$ is given in the form (\ref{fform}) and $B=[\zeta^k]^{2^{p}-1}_{k=0}.$ Let $W=[W_i(u)]^{2^{p}-1}_{i=0}$ be a column matrix (\ref{Sk2}), where $W_i(u)$ denotes the WHT at point $u\in\mathbb{Z}^n_2$ of the Boolean function $a(x)\oplus z_{i,0}a_0(x)\oplus \cdots \oplus z_{i,p-1}a_{p-1}(x),$ $z_i=(z_{i,0},\ldots,z_{i,p-1})\in\mathbb{Z}^{p}_2,$ $i=0,\ldots,2^{p}-1.$ Then:
\begin{enumerate}[a)]
\item Let $n$ be even and $2^{h-1}<q\leq 2^h$ be even. If all functions $a(x)\oplus z_{i,0}a_0(x)\oplus \ldots \oplus z_{i,p-1}a_{p-1}(x)$ are bent Boolean functions, for every $z_i\in\mathbb{Z}^{p}_2,$ $i=0,\ldots,2^{p}-1,$ and there exists $r\in\{0,1,\ldots,2^{p}-1\}$ so that the transpose of a matrix $W$ defined by (\ref{Sk2}) is equal to $H^{(r)}_{2^{p}}$, i.e., $W^{T}=\pm H^{(r)}_{2^{p}}$ ($\triangle$), then $f(x)$ is gbent.
\item Let $n$ be odd and $q=2^{p+1}=2^h.$ If all functions $a(x)\oplus z_{i,0}a_0(x)\oplus \ldots \oplus z_{i,p-1}a_{p-1}(x)$ are semi-bent Boolean functions, for every $z_i\in\mathbb{Z}^{p}_2,$ $i=0,1,\ldots,2^{p}-1,$ and there exists $r\in\{0,1,\ldots,2^{p}-1\}$ so that $W^{T}=\{\pm\sqrt{2}H^{(r)}_{2^{p-1}},\textbf{0}_{2^{p-1}}\}$ or  $W^{T}=\{\textbf{0}_{2^{p-1}},\pm\sqrt{2}H^{(r)}_{2^{p-1}}\}$  ($\square$)  ($\textbf{0}_{2^{p-1}}$ is the all-zero vector of length $2^{p-1}$), then $f(x)$ is gbent.
\end{enumerate}
\end{theo}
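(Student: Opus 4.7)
The natural strategy is to work directly from the closed-form expression (\ref{abs3}) for $2^{2p}|\mathcal{H}_f(u)|^2$ and show that under each set of hypotheses the right-hand side equals $2^{2p}$, so that $|\mathcal{H}_f(u)|=1$ for every $u\in\mathbb{Z}_2^n$. The key tool in both parts is the orthogonality of rows of $H_{2^p}$ (and, in part (b), its block decomposition via $H_{2^{p-1}}$), which will control the vector $S=H_{2^p}W$ defined in (\ref{Sk2}).

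For part (a), since every function $a\oplus z_{i,0}a_0\oplus\cdots\oplus z_{i,p-1}a_{p-1}$ is bent and $n$ is even, each component $W_i(u)\in\{\pm 1\}$, so the column vector $W$ has entries $\pm 1$. The assumption $(\triangle)$, namely $W^T=\pm H^{(r)}_{2^p}$, says that $W$ is, up to a global sign, the transpose of the $r$-th row of $H_{2^p}$. Using $H_{2^p}H_{2^p}^T=2^p I$ together with the symmetry of $H_{2^p}$, one gets $S_k=\sum_i h_{i,k}W_i=\pm 2^p\,\delta_{k,r}$. Thus $\sum_k S_k^2=2^{2p}$, and in the cross-term sum of (\ref{abs3}) every product $S_iS_{i+k}$ with $k\ge 1$ vanishes (such a product would require $i=r$ and $i+k=r$, forcing $k=0$). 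Hence $2^{2p}|\mathcal{H}_f(u)|^2=2^{2p}$, and $f$ is gbent.

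For part (b), $n$ is odd so each $W_i(u)\in\{0,\pm\sqrt{2}\}$, and by the hypothesis $(\square)$ the nonzero entries of $W$ occupy either the top or the bottom half and, up to sign, reproduce $\pm\sqrt{2}(H^{(r)}_{2^{p-1}})^T$. Applying $H_{2^p}$ in its block form and using the orthogonality of rows of $H_{2^{p-1}}$, one finds that $S$ has exactly two nonzero entries, at positions $r$ and $r+2^{p-1}$, each of absolute value $\sqrt{2}\cdot 2^{p-1}$, with equal signs in the first case and opposite signs in the second. In either case $\sum_k S_k^2=2\cdot 2\cdot 2^{2p-2}=2^{2p}$. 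The only cross term in (\ref{abs3}) that could survive corresponds to $i=r$, $k=2^{p-1}$; but since $q=2^{p+1}$, its coefficient $2\cos(2\pi\cdot 2^{p-1}/q)=2\cos(\pi/2)$ equals zero, so this term vanishes irrespective of the sign of $S_rS_{r+2^{p-1}}$. Therefore $|\mathcal{H}_f(u)|^2=1$.

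The only step that is not pure bookkeeping is the last observation in part (b): the vanishing of the surviving cross term depends crucially on the identity $\cos(\pi/2)=0$, which in turn requires $q=2^{p+1}=2^h$ exactly. This is the structural reason the odd-$n$ statement is restricted to $q$ a power of two, and explains why the argument does not extend verbatim to arbitrary even $q$ in the odd-$n$ case with this choice of $p$; for the even-$n$ case no such coincidence is needed, since the cross terms vanish identically from the support of $S$ alone.
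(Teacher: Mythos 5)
Your proposal is correct and follows essentially the same route as the paper's own proof: both parts compute $S=H_{2^p}W$ via row orthogonality to show $S$ is supported on one position (even $n$) or on the pair $\{r,\,r+2^{p-1}\}$ (odd $n$), and both kill the single surviving cross term in (\ref{abs3}) using $\cos(\pi/2)=0$, which is exactly where the restriction $q=2^{p+1}$ enters. Your closing remark on why the odd-$n$ case forces $q$ to be a power of two matches the paper's own observation.
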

\begin{proof}
$a)$ Let $n$ be even, and let us assume that all functions $a(x)\oplus z_{i,0}a_0(x)\oplus \ldots \oplus z_{i,p-1}a_{p-1}(x)$ are bent Boolean functions, for every $z_i\in\mathbb{Z}^{p}_2,$ $i=0,\ldots,2^{p}-1.$ In addition, let us assume that there exists an integer $r\in\{0,1,\ldots,2^{p}-1\}$ so that $W^{T}=\pm H^{(r)}_{2^{p}}.$ Then the properties of Hadamard matrices in (\ref{Sk2}) imply the following:
\begin{eqnarray*}\label{Sk3}
S=\left(
    \begin{array}{c}
      H^{(0)}_{2^{p}}\cdot W^T \\
      \vdots \\
      H^{(r)}_{2^{p}}\cdot W^T  \\
        \vdots \\
      H^{(2^{p}-1)}_{2^{p}}\cdot W^T \\
    \end{array}
  \right)=\left(
            \begin{array}{c}
              0 \\
              \vdots \\
              H^{(r)}_{2^{p}}\cdot (\pm H^{(r)}_{2^{p}}) \\
              \vdots \\
              0 \\
            \end{array}
          \right)
  =\left(
            \begin{array}{c}
              0 \\
              \vdots \\
              \pm 2^{p} \\
              \vdots \\
              0 \\
            \end{array}
          \right),
\end{eqnarray*}
and for every $i$ and $j$ ($i\neq j$), it holds that $S_iS_j=0.$ Here we regard $H^{(r)}_{2^{p}}$ and $W^T$ as vectors, and using the dot product we may write $S_r=H^{(r)}_{2^{p}}\cdot W^T$. In other words, we use this notation to avoid less precise notation $S_r=H^{(r)}_{2^{p}}W.$ Since in the second sum in (\ref{abs3}) it  is not possible that $S_i=S_{i+k},$ for any $k=1,\ldots,2^{p}-1$ and $i=0,\ldots,2^{p}-1-k$, we get that (\ref{abs3}) is given as
\begin{eqnarray*}
2^{2p}|\mathcal{H}_f(u)|^2=S^2_r=2^{2p},
\end{eqnarray*}
which means that $|\mathcal{H}_f(u)|^2=1,$ i.e., the function $f(x)$ is gbent.\newline
$b)$ Let $n$ be odd and $q=2^{p+1}$. The condition that all functions $a(x)\oplus z_{i,0}a_0(x)\oplus  \ldots \oplus z_{i,p-1}a_{p-1}(x)$  are semi-bent Boolean functions, for every $z_i\in\mathbb{Z}^{p}_2,$ $i=0,1,\ldots,2^{p}-1,$ means that $W_i(u)\in\{0,\pm \sqrt{2}\}.$ First, note that the definition of the Hadamard matrix implies that there are exactly two rows in $H_{2^p}$ whose  first  half of its entries are equal to each other (and second halves contain opposite signs). More precisely, for any $r\in \{0,1,\ldots,2^{p-1}-1\}$ and for rows given as
$$H^{(r)}_{2^p}=\{H^{(r)}_{2^{p-1}}, H^{(r)}_{2^{p-1}}\}\;\; \wedge\;\; H^{(r+2^{p-1})}_{2^p}=\{H^{(r+2^{p-1})}_{2^{p-1}},- H^{(r+2^{p-1})}_{2^{p-1}}\}$$ it holds that $H^{(r)}_{2^{p-1}}=H^{(r+2^{p-1})}_{2^{p-1}}.$ %\newline
%{\bf Do not understand this at all ??? In the first place what do you mean by equal here ??}
Therefore, the condition $W^{T}=\{\pm\sqrt{2}H^{(r)}_{2^{p-1}},\textbf{0}_{2^{p-1}}\}$ or $W^{T}=\{\textbf{0}_{2^{p-1}},\pm\sqrt{2}H^{(r)}_{2^{p-1}}\}$ implies $S_r=\pm 2^{p-1}\sqrt{2}$ and $S_{r+2^{p-1}}=\pm 2^{p-1}\sqrt{2}$, which gives:
\begin{eqnarray*}\label{Skodd}
S=\left(
    \begin{array}{c}
      H^{(0)}_{2^{p}}\cdot W^T \\
      \vdots \\
      H^{(r)}_{2^{p}}\cdot W^T  \\
        \vdots \\
      H^{(r+2^{p-1})}_{2^{p}}\cdot W^T \\
       \vdots \\
      H^{(2^{p}-1)}_{2^{p}}\cdot W^T \\
    \end{array}
  \right)=\left(
            \begin{array}{c}
               0 \\
      \vdots \\
      \pm 2^{p-1}\sqrt{2}  \\
        \vdots \\
     \pm 2^{p-1}\sqrt{2}  \\
       \vdots \\
     0 \\
            \end{array}
          \right).
\end{eqnarray*}
Hence, for every $i\in\{0,\ldots,2^p-1\}\setminus \{r,r+2^{p-1}\}$ we have that $S_i=0.$ It is not difficult to see that all $\sum^{2^p-1-k}_{i=0}S_iS_{i+k}=0$ except for the case when $k=2^{p-1},$ for which we have $\sum^{2^p-1-2^{p-1}}_{i=0}S_iS_{i+2^{p-1}}=S_{r}S_{r+2^{p-1}}=2^{2p-1}.$ However, using $q=2^{p+1}$ in the second sum in (\ref{abs3}), for $k=2^{p-1}$ we have the coefficient $\cos \frac{2\pi k}{q}=\cos \frac{\pi 2^p}{2^{p+1}}=\cos\frac{\pi}{2}=0,$ which means that the whole second sum in (\ref{abs3}) is equal to zero.  Note that $k=2^{p-1}$ does not depend on the integer $r$ in ($\square$), and it is not difficult to see that the only value of $q$ for which $\cos \frac{2\pi k}{q}=0$ is $q=2^{p+1}$ (due to a fact that $q$ is an integer). Consequently, in (\ref{abs3}) we have $$2^{2p}|\mathcal{H}_f(u)|^2=S^2_r+S^2_{r+2^{p-1}}=2^{2p-1}+2^{2p-1}=2^{2p},$$ i.e., $f(x)$ is gbent.\qed
\end{proof}
\begin{rem}
Since $2^{h-1}<q\leq 2^{h},$ it is clear that $p\leq h-1$ in (\ref{fform}). Moreover, the condition $q=2^{p+1}$ in the second statement in Theorem \ref{mainth2} actually means that $q=2^h,$ since it is the only power of $2$ for which it holds $2^{h-1}<q\leq 2^{h}.$ In the case when $n$ and $q$ are even, the gbent functions always exist (consider $f(x)=\frac{q}{2}a(x)$, $a(x)$ any bent Boolean function). The case when $n$ is odd is much more difficult to handle which is also evident through the nonexistence for certain odd $n$ and certain $q$, see e.g.  \cite{Liu}.
\end{rem}
\begin{op}\label{op1}
Prove the converse of Theorem \ref{mainth2}, i.e., prove that the conditions given in Theorem \ref{mainth2} are also necessary.
\end{op}
In what follows we discuss some results which support Open problem \ref{op1}. First, we have the following facts:
\begin{itemize}
\item The converse holds for $q=4$ where the condition ($\triangle$) trivially holds, and the function $f(x)$ is given in the form $f(x)=2a(x)+a_0(x)$ \cite{Tok}, where $n$ is even.
\item When $q=8$ we have Theorem \ref{theo1}, where the conditions $(*)$ and $(**)$ are actually equivalent to conditions ($\triangle$) and ($\square$), respectively (see Section \ref{eqforms}).
\end{itemize}

\subsection{Equivalent forms of conditions ($\triangle$) and ($\square$)}\label{eqforms}

In this section we present two equivalent forms of the condition  ($\triangle$) which are actually imposed by the Hadamard recursion (the same applies on the condition ($\square$)).
% It is important to point out that our trivial method of satisfying the conditions  ($\triangle$) and ($\square$) are based on the idea that we choose the component functions so that $W_i(u)$ satisfy the Hadamard recursion.
Let us discuss the form of the condition ($\triangle$) in Theorem~\ref{mainth2}, where we consider the function $f(x)$ in the form (\ref{fform}). Recall that the condition $(\triangle)$ regards $W^T$ and $H^{(r)}_{2^p}$ as vectors (as mentioned in the proof of Theorem \ref{mainth2}). Hence, for the WHT coefficients $W_i(u)$ at point $u\in \mathbb{Z}^n_2$ defined in Theorem \ref{mainth2} we consider the equality of two vectors given by $$W^T=\{W_0(u),W_1(u),\ldots,W_{2^{p-1}}\}=H^{(r)}_{2^p}.$$
%In this section we discuss the form of the condition ($\triangle$) in Theorem~\ref{mainth2}, which for $f$ represented by (\ref{fform})  implies the existence of some $r\in\{0,1,\ldots,2^{h}-1\}$ (taking $p=h$ in (\ref{fform})), for which $W^T=\pm H^{(r)}_{2^h}$.
Let $H^{(r)}_{2^{p}}$ ($h\geq 1$) be an arbitrary row of the Hadamard matrix, i.e., $H^{(r)}_{2^{p}}=\{H^{(r)}_{2^{p-1}},\pm H^{(r)}_{2^{p-1}}\},$ where $r\in\{0,1,\ldots,2^p-1\}.$ This implies that for every $t=1,2,\ldots,p$  and $i=0,1,\ldots,2^{t-1}-1$, it holds $h_{r,i}=\pm h_{r,i+2^{t-1}}.$  This further means  that the condition $W^T=\pm H^{(r)}_{2^p}$
%{\bf Why this condition is important ?? Specify exactly where we used it !!! In general, your tendency is not to relate the things in an exact manner but rather to leave the reader to figure out the connection and you just avoid important explanations in terms of why and how before getting into technical details !!! You have to learn to start any section with a few sentences explaining the purpose of the section for easier reading !!! }
is equivalent to a set of equalities
\begin{eqnarray}\label{forma}
W_i(u)=\pm W_{i+2^{t-1}}(u),\;\;  t=1,2,\ldots,p,\;\; i=0,1,\ldots2^{t-1}-1,
\end{eqnarray}
where $u\in \mathbb{Z}^n_2.$ For convenience, to see that indices $t$ and $i$ actually represent the Hadamard recursion, let as consider an example when $p=3$:
\begin{enumerate}[1.]
\item For $t=1$ we have that $i$ takes only the value $0$ and consequently we have $W_i(u)=W_0=\pm W_{i+2^{t-1}}(u)=\pm W_1(u).$ Clearly, for any value of $W_0(u)=\pm 1,$ we have that the vector (row) $\{W_0(u),W_1(u)\}=\{W_0(u),\pm W_0(u)\}$ is always equal to some row of the Sylvester-Hadamard matrix $\pm H_2.$
\item For $t=2$ we have that $i$ takes values $0$ and $1.$ For $i=0$ we have $W_0(u)=\pm W_2(u)$ and $W_1(u)=\pm W_3(u).$ Note that the signs for both equalities are the same. By the previous step and any value $W_0(u)=\pm 1$, we have that the vector $\{W_0(u),W_1(u),W_2(u),W_3(u)\}$ is always equal to some row of the Sylvester-Hadamard matrix $\pm H_{2^2}.$ The same calculation further applies for $t=3=p,$ where $i=0,1,2,$ and we get that $\{W_0(u),\ldots,W_{7}(u)\}$ is always equal to some row of the Sylvester-Hadamard matrix $\pm H_{2^3}.$
\end{enumerate}
It is important to note here that the signs "$\pm$" in every step always depend on the current value of $t.$ For instance, when we previously had $t=1,$ the sign in front of $W_1(u)$ is fixed for all upcoming values of $t> 1$. For $t=2,$ the signs in front of $W_2(u)$ and $W_3(u)$ are also fixed in the same way, etc.

Equivalently, the relation (\ref{forma}) suggests that the condition $W^T=\pm H^{(r)}_{2^p}$ can be written in an equivalent way,
\begin{eqnarray}\label{forma1}
\prod^{2^{t-1}-1}_{i=0}W_i(u)=\prod^{2^{t-1}-1}_{i=0}(\pm W_{i+2^{t-1}}(u)),\;\;  t=1,2,\ldots,p,\;\; i=0,1,\ldots,2^{t-1}-1.
\end{eqnarray}
%\begin{rem}\label{eqrem}
%\textcolor[rgb]{0.98,0.00,0.00}{Hence,} the condition (\ref{forma}) must to be satisfied for every $t=1,2,\ldots,p.$ For instance, if $p=3$ and we have eight  %Walsh-Hadamard coefficients, the equality (\ref{forma1}) for $t=3$ can be satisfied by taking all coefficients to be equal to $1$ on the left hand side of %equality (\ref{forma1}), except one which we take to be $-1$, and the same on the right hand side.
%{\bf Right side of what, left side if what .... ??? Rather make a clear example here and explain exactly what you mean or refer to the examples in the next section ... The whole section is quite heavy though we deal with extra simple condition }
%However, it does not mean that $W^T=\pm H^{(r)}_{2^3},$ for any $r\in\{0,1,\ldots,2^3-1\},$ since it does not satisfy Sylvester-Hadamard's recursion. Note that %every row (apart from the first row) of the Sylvester-Hadamard matrix contains equal number of $1s$ and $-1s$.
%\end{rem}
%In addition, another simple fact from the Sylvester-Hadamard matrix is possible to connect with the condition $(\bigtriangleup),$ and that is the fact that $W_i(u)W_{2^{p}-i-1}(u)$ is constant (with value $1$ or $-1$ which depends on the row), for all $i=0,1,\ldots2^{p-1}-1,$ and $u\in \mathbb{Z}^n_2.$ We have that $W_i(u)W_{2^{p}-i-1}(u)=\pm 1$ depends on the integer $r$ in the equality $W^T=\pm H^{(r)}_{2^p},$ where $r\in \{0,1,\ldots,2^{p}-1\}.$
It is not difficult to see that the condition $(*)$  $W_0(u)W_3(u)=W_1(u)W_2(u)$ in Theorem \ref{theo1} is equivalent to equality (\ref{forma1}) (where $p=3$).

In the case when $n$ is even, the discussion above provides some equivalent forms of the condition ($\triangle$). However, in the case when $n$ is odd we have one additional property on Walsh-Hadamard coefficients $W_i(u)$ in the condition ($\square$). First note that condition $W^{T}=\{\pm\sqrt{2}H^{(r)}_{2^{p}},\textbf{0}_{2^{p}}\}$ or  $W^{T}=\{\textbf{0}_{2^{p}},\pm\sqrt{2}H^{(r)}_{2^{p}}\}$, for some $r\in\{0,1,\ldots,2^p-1\}$, means that we can apply the discussion above on half part of $W^T$, i.e., on $\pm\sqrt{2}H^{(r)}_{2^{p}}.$ Here we mean that signs of half coordinates of  $W^T$ must satisfy the Sylvester-Hadamard recurrence formula. However, for $i=0,1,\ldots2^{p-1}-1$ we have $W_i(u)W_{2^{p}-i-1}(u)=0$ ($t=p$ here), since half coordinates of $W^T$ are zeroes. The equality $W_i(u)W_{2^{p}-i-1}(u)=0,$ for $i=0,1,\ldots2^{p-1}-1,$  $u\in \mathbb{Z}^n_2,$ means that the functions $a(x)\oplus z_{i,0}a_0(x)\oplus z_{i,1}a_1(x)\oplus \ldots \oplus z_{i,p-1}a_{p-1}(x)$ and $a(x)\oplus z_{2^{t}-i-1,0}a_0(x)\oplus z_{2^{t}-i-1,1}a_1(x)\oplus \ldots \oplus z_{2^{t}-i-1,p-1}a_{p-1}(x)$ are disjoint spectra functions \cite{CCA}.

%{\bf This is crazy to read !!!}

\subsection{Necessary and sufficient conditions for the GMMF class}\label{sec:GMMF}

For any arbitrary positive even integer $q$, an arbitrary gbent function $f:\mathbb{Z}^{2n}_2\rightarrow \mathbb{Z}_q$ that belongs to the GMMF class (for instance see \cite{BentGbent}) is defined as $$f(x,y)=\frac{q}{2}x\cdot \sigma(y)+g(y),$$
where $\sigma$ is a permutation on $\mathbb{Z}^n_2$ and $g:\mathbb{Z}^n_2\rightarrow \mathbb{Z}_q$ an arbitrary generalized function from $\mathcal{GB}^q_n$. We see that here $f(x,y)$ contains the term $\frac{q}{2}a(x),$ where $a(x,y)=x\cdot \sigma(y),$ and therefore only $g(y)$ remains to be described in terms of the component Boolean functions by means of Theorem \ref{mainth2} (due to its connection with $W_i(u)$).

With the following proposition, we prove that all functions from the GMMF class trivially satisfy both conditions in Theorem \ref{mainth2}, and thus they also support Open problem \ref{op1}.
\begin{prop}
Every gbent function from GMMF class satisfies the converse of Theorem \ref{mainth2}.
\end{prop}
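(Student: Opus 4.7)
The plan is to verify directly that an arbitrary gbent function $f(x,y)=\frac{q}{2}x\cdot\sigma(y)+g(y)$ from the GMMF class, with $g(y)=\sum_{i=0}^{p-1}2^{i}a_{i}(y)$ and $a_{i}\in\mathcal{B}_{n}$, meets the sufficient conditions of Theorem~\ref{mainth2} (making them necessary for this class as well). Since the number of variables is $2n$, which is even, only condition $(\triangle)$ of Theorem~\ref{mainth2}(a) needs to be checked. I would identify the distinguished term in the form (\ref{fform}) as $a(x,y)=x\cdot\sigma(y)$, so that for each $z_{i}=(z_{i,0},\ldots,z_{i,p-1})\in\mathbb{Z}_{2}^{p}$ the associated auxiliary Boolean function from the statement of Theorem~\ref{mainth2} is
$$F_{i}(x,y)=x\cdot\sigma(y)\oplus h_{i}(y),\qquad h_{i}(y)=\bigoplus_{j=0}^{p-1}z_{i,j}\,a_{j}(y).$$

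The first step is the (standard) Walsh computation for Maiorana--McFarland functions. Separating the $x$-sum from the $y$-sum and using that $\sigma$ is a permutation so that $\sum_{x}(-1)^{x\cdot(\sigma(y)\oplus u)}$ collapses to the single term $y=\sigma^{-1}(u)$, one obtains
$$W_{F_{i}}(u,v)=(-1)^{h_{i}(\sigma^{-1}(u))\oplus v\cdot\sigma^{-1}(u)},\qquad (u,v)\in\mathbb{Z}_{2}^{n}\times\mathbb{Z}_{2}^{n},$$
so $|W_{F_{i}}(u,v)|=1$ for every $i$. This already yields the bentness of each $F_{i}$, which is the first half of $(\triangle)$.

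For the second half I would fix $(u,v)$, set $y_{0}=\sigma^{-1}(u)$, $\epsilon=(-1)^{v\cdot y_{0}}\in\{\pm 1\}$, and collect the binary vector of component-function values $\beta=(a_{0}(y_{0}),\ldots,a_{p-1}(y_{0}))\in\mathbb{Z}_{2}^{p}$, whose integer representation I denote by $k$. The MM formula then rewrites as
$$W_{i}(u,v)=\epsilon\cdot(-1)^{z_{i}\cdot\beta}, \qquad i=0,1,\ldots,2^{p}-1.$$
Recalling the identification $H^{(k)}_{2^{p}}=\bigl((-1)^{z_{k}\cdot z_{i}}\bigr)_{i=0}^{2^{p}-1}$ used throughout Section~\ref{sec:proof1}, this means the full vector $W^{T}=(W_{0}(u,v),\ldots,W_{2^{p}-1}(u,v))$ equals $\epsilon\cdot H^{(k)}_{2^{p}}=\pm H^{(r)}_{2^{p}}$ with $r=k$, which is exactly condition $(\triangle)$. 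Hence the proposition follows.

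The main obstacle is conceptual rather than computational: one has to notice that because the coefficient of $x$ in $f$ is the pure inner product $\sigma(y)$, with no further Boolean summand in the $x$-variables, the Walsh values $W_{i}(u,v)$ at a fixed point depend on the whole family of component functions $a_{j}$ only through the single evaluation $a_{j}(\sigma^{-1}(u))$. It is this pointwise ``collapse'' that forces the vector $W$ to coincide with a signed row of $H_{2^{p}}$, which is otherwise the delicate combinatorial part of $(\triangle)$. No nontrivial character-sum estimates are required, which is precisely why the GMMF class is a natural piece of evidence toward Open Problem~\ref{op1}.
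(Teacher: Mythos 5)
Your proof is correct and follows essentially the same route as the paper: both arguments rest on the Maiorana--McFarland collapse of the Walsh sum to the single point $y=\sigma^{-1}(u)$, which makes every $W_i$ equal to $\pm 1$ and ties all of them to the single evaluation vector $(a_0(\sigma^{-1}(u)),\ldots,a_{p-1}(\sigma^{-1}(u)))$. The only cosmetic difference is that you exhibit $W^T$ directly as the signed Hadamard row $\epsilon\, H^{(k)}_{2^{p}}$ indexed by that evaluation vector, whereas the paper verifies the equivalent pairwise conditions $W_i(u)W_{i+2^{t-1}}(u)=(-1)^{z^{(2^{t-1})}(\sigma^{-1}(u_1))}$ of (\ref{forma}); your formulation is, if anything, slightly more direct.
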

\begin{proof}
Let the GMMF function $f:\mathbb{Z}^{2n}_2\rightarrow \mathbb{Z}_q$, for even  $2^{h-1} < q \leq 2^h$, be written in the form  $$f(x,y)=\frac{q}{2}x\cdot \sigma(y)+g(y)=\frac{q}{2}a(x,y)+a_0(y)+2a_1(y)+\ldots+2^{p-1}a_{p-1}(y),$$ where  $p\leq h-1$, $a_i\in \mathcal{B}_{2n}$, $a(x,y)=x\cdot\sigma(y)$, and $g(y)$ is uniquely expressed through $a_i$ as $g(y)=a_0(y)+2a_1(y)+\ldots+2^{p-1}a_{p-1}(y).$ Since $f(x,y)$ is written in the form (\ref{fform}), according to Theorem \ref{mainth2} we have that $W_i(u)$ is the WHT of the function $$a(x,y)\oplus z_{i,0}a_0(y)\oplus \ldots\oplus z_{i,p-1}a_{p-1}(y),$$ where $z_i=(z_{i,0},\ldots,z_{i,p-1})\in \mathbb{Z}^{p}_2,$ $i=0,\ldots,2^{p}-1,$ $u\in \mathbb{Z}^{2n}_2.$
Clearly, for all $i=0,1,\ldots,2^p-1$, it holds that $W_i(u)=\pm 1,$ for every $u\in \mathbb{Z}^n_2$, since all functions above belong to well known Maiorana-McFarland class of bent Boolean functions. This actually proves the first part of converse of Theorem \ref{mainth2}. It only remains to prove that condition ($\triangle$) holds. By relation (\ref{forma}), the condition $(\bigtriangleup)$ is equivalent to the fact that $W_i(u)W_{i+2^{t-1}}(u)$ takes values $\pm 1$  (Section \ref{eqforms}) for all $t=1,2,\ldots,p$ and $i=0,1,\ldots,2^{p-1}-1.$ Let us denote
%\begin{eqnarray*}
%\left\{\begin{array}{c}
%         z^{(i)}(y)=z_{i,0}a_0(y)\oplus \ldots\oplus z_{i,p-1}a_{p-1}(y),\\
%        z^{(2^{t}-i-1)}(y)=z_{2^{t}-i-1,0}a_0(y)\oplus \ldots\oplus z_{2^{t}-i-1,p-1}a_{p-1}(y),
%       \end{array}
%\right.
%\end{eqnarray*}
$$ z^{(i)}(y)=z_{i,0}a_0(y)\oplus \ldots\oplus z_{i,p-1}a_{p-1}(y),$$
for $i=0,1,\ldots,2^{p}-1.$ Now, for every $t=1,2,\ldots,p$, $i=0,1,\ldots,2^{t-1}-1$, and $u=(u_1,u_2)\in \mathbb{Z}^{n}_2\times \mathbb{Z}^{n}_2,$  we have the following calculation:
\begin{eqnarray*}
2^{2n}W_i(u)W_{i+2^{t-1}}(u)&=&\sum_{x,y\in \mathbb{Z}^n_2}(-1)^{a(x,y)\oplus z^{(i)}(y)+u\cdot (x,y)}\sum_{x,y\in \mathbb{Z}^n_2}(-1)^{a(x,y)\oplus z^{(i+2^{t-1})}(y)+u\cdot (x,y)}\\
&=&\left(\sum_{y\in \mathbb{Z}^n_2}(-1)^{ z^{(i)}(y)\oplus u_2 \cdot y}\sum_{x\in \mathbb{Z}^n_2}(-1)^{a(x,y)\oplus u_1\cdot x}\right)\cdot \\
&\cdot&\left(\sum_{y\in \mathbb{Z}^n_2}(-1)^{ z^{(i+2^{t-1})}(y)\oplus u_2\cdot y}\sum_{x\in \mathbb{Z}^n_2}(-1)^{a(x,y)\oplus u_1\cdot x}\right).
\end{eqnarray*}
Since $\sum_{x\in \mathbb{Z}^n_2}(-1)^{a(x,y)\oplus u_1\cdot x}=\sum_{x\in \mathbb{Z}^n_2}(-1)^{x\cdot\sigma(y)\oplus u_1\cdot x}=0,$ unless $\sigma(y)=u_1$ which happens exactly when $y=\sigma^{-1}(u_1).$ In the case $\sigma(y)=u_1,$ then $\sum_{x\in \mathbb{Z}^n_2}(-1)^{x\cdot\sigma(y)\oplus u_1\cdot x}=2^{n}.$ It is not difficult to see that for any $t$, $i$ and $y\in \mathbb{Z}^n_2$, it holds that $z^{(i+2^{t-1})}(y)=z^{(i)}(y)\oplus z^{(2^{t-1})}(y).$ Therefore, we have:
\begin{eqnarray*}
2^{2n}W_i(u)W_{i+2^{t-1}}(u)&=&(2^n(-1)^{z^{(i)}(y)\oplus u_2\cdot y})\cdot(2^n(-1)^{z^{(i+2^{t-1})}(y)\oplus u_2\cdot y})\\
&=&2^{2n}(-1)^{z^{(i)}(y)\oplus z^{(i+2^{t-1})}(y)}=2^{2n}(-1)^{z^{(2^{t-1})}(y)}.
\end{eqnarray*}
where $y=\sigma^{-1}(u_1)$ is fixed, since $u=(u_1,u_2)$ is fixed. Hence, for every $t=1,2,\ldots,p$ and $i=0,1,\ldots,2^{t-1}-1$, we have that $W_i(u)W_{i+2^{t-1}}(u)$ is constant (with value $1$ of $-1$) which corresponds to selected value of $t$, i.e., the condition $(\bigtriangleup)$ is satisfied for every $u\in \mathbb{Z}^{2n}_2$ and arbitrary Boolean functions $a_i\in \mathcal{B}_{2n},$ according to Section \ref{eqforms} and relation (\ref{forma}). Recall that for every (but fixed) value of  $t$ we have that the sign of $W_{i+2^{t-1}}(u)=\pm W_i(u)$ is fixed for all $i=0,1,\ldots,2^{t-1}-1.$ \qed
\end{proof}

\section{Fulfilling  the necessary conditions for gbent property}\label{sec:conditions}

In this section we discuss methods for satisfying the condition ($\triangle$) (or ($\square$)) from Theorem \ref{mainth2}, where we consider $W^T=\pm H^{(r)}_{2^p}$ for some integer $p\geq 1$ and $r\in\{0,1,\ldots,2^p-1\}.$ We discuss certain rather trivial approaches to satisfy these conditions, based on the discussion provided in Section \ref{eqforms}.

In essence, for an arbitrary function $g\in \mathcal{B}_n,$ using the equality $W_g(u)=-W_{g\oplus 1}(u)$ we are able to choose the component functions in Theorem \ref{mainth2} so that the condition ($\triangle$) is satisfied. This actually represents a trivial way to satisfy ($\triangle$), since in that case the equality $W^T=\pm H^{(r)}_{2^h}$ does not depend on $u\in \mathbb{Z}^n_2$.
%This simple approach is illustrated in the example below.
Another possible method employs a linear  translate of a  function,  which gives a simple relationship between the Walsh spectra of the given function and its translate.  Indeed, if for some fixed $\alpha\in\mathbb{Z}^n_2$ and $g_1,g_2\in \mathcal{B}_n$ we have $g_1(x)=g_2(x\oplus \alpha),$ for all $x\in\mathbb{Z}^n_2$,  then their Walsh spectra are related through $W_{g_1}(u)=(-1)^{u\cdot \alpha}W_{g_2}(u)$, for all $u\in \mathbb{Z}^n_2.$ This equality implies that the condition $W^T=\pm H^{(r)}_{2^p}$ actually depends on $u\in \mathbb{Z}^n_2,$ which means that the integer $r$ may change for different $u\in \mathbb{Z}^n_2.$

\begin{ex}\label{exselect}
In this example  we present a trivial method of satisfying the condition ($\triangle$) using the equality $W_g(u)=-W_{g\oplus 1}(u),$ for any $g\in \mathcal{B}_n$. Let $q=16=2^4$ and $f(x)=a_0(x)+2a_1(x)+2^2a_2(x)+2^{3}a_{3}(x).$ In this case, we have the matrix $W=[W_i(u)]^{2^3-1}_{i=0}$, where $W_i(u)$ is WHT at point $u\in\mathbb{Z}^n_2$ of the function $$a_3(x)\oplus z_{i,0}a_0(x)\oplus z_{i,1}a_1(x)\oplus z_{i,2}a_2(x),$$ $z_i=(z_{i,0},z_{i,1},z_{i,2})\in \mathbb{Z}^3_2.$ Hence, the component functions are chosen in the following way:
\begin{enumerate}[1.]
\item Let $W_0(u)=W_{a_3}(u)$ and $W_1(u)=W_{a_3\oplus a_0}(u)$ be WHTs of two arbitrary bent functions $a_3(x)$ and $a_3(x)\oplus a_0(x)$, i.e., $W_0(u),W_1(u)=\pm 1,$ for any $u\in\mathbb{Z}^n_2$. Assuming that $a_3(x)$ is bent,  we may for instance take $a_0\in \mathcal{A}_n$.  Alternatively, we can select  $a_3(x)$ and $a_0(x)$ to be component functions of some vectorial bent function.
\item Now we must select $a_1(x)$ so that  $a_3(x)\oplus a_1(x)$ and $a_3(x)\oplus a_0(x) \oplus a_1(x)$ are bent,  satisfying additionally $$\{W_0(u),W_1(u)\}=\pm \{W_2(u),W_3(u)\},$$ where
$W_2(u)=W_{a_3\oplus a_1}(u)$ and $W_3(u)=W_{a_3\oplus a_0 \oplus a_1}(u).$ For instance, if we want to have $\{W_0(u),W_1(u)\}=- \{W_2(u),W_3(u)\},$ then we need to choose the function $a_1(x)$ which satisfies $$a_3(x)\oplus a_1(x)=a_3(x) \oplus 1\;\; \wedge\;\; a_3(x)\oplus a_0(x) \oplus a_1(x)=a_3(x)\oplus a_0(x)\oplus 1.$$ Hence, it must the a case that the function $a_1(x)$ is a constant function equal to $1$, i.e., $a_1(x)=1$ for every $x\in\mathbb{Z}^n_2.$ On the other side, selecting $a_1(x)=0,$ for every $x\in\mathbb{Z}^n_2,$ implies $\{W_0(u),W_1(u)\}= \{W_2(u),W_3(u)\}.$
\item Now, the rest of functions are chosen with respect to equality $$\{W_0(u),W_1(u),W_2(u),W_3(u)\}=\pm \{W_4(u),W_5(u),W_6(u),W_7(u)\},$$
where $W_4(u)=W_{a_3\oplus a_2},$ $W_5(u)=W_{a_3\oplus a_0 \oplus a_2}$, $W_6(u)=W_{a_3\oplus a_1 \oplus a_2}$ and $W_7(u)=W_{a_3\oplus a_0\oplus a_1 \oplus a_2}.$ It is not difficult to see that the sign "$+$" imposes $a_2(x)=0$, and the sign $"-"$ imposes $a_2(x)=1$, for every $x\in\mathbb{Z}^n_2.$
\end{enumerate}
Since we started with two arbitrary functions $a_3(x)$ and $a_0(x)$, with first choice "$-$" and second "$+$", it is not difficult to see that all possible values of $W_{a_3}(u)$ and $W_{a_3\oplus a_0}(u)$, due to a previous choice of the component functions, imply that $W^T\in\{\pm H^{(2)}_{2^3},\pm H^{(3)}_{2^3}\}.$
\end{ex}
%\begin{ex}
%Now we briefly consider the equality (\ref{rec2}), for the component functions from the previous example. For any two bent Boolean functions $a_3(x)$ and $a_3\oplus a_0(x),$ we have $W_0(u),W_1(u)=\pm 1.$ By choosing the bent component functions $a_3(x)\oplus a_1(x)$ and $a_3(x)\oplus a_0(x)\oplus a_1(x)$ such that $$a_3(x)\oplus a_1(x)=a_3(x\oplus \alpha)\;\;\wedge\;\; a_3(x)\oplus a_0(x)\oplus a_1(x)=a_3(x\oplus \alpha)\oplus a_0(x\oplus \alpha),$$ for some $\alpha\in \mathbb{Z}^n_2$ and any $x\in \mathbb{Z}^n_2,$ then $$W_{a_3\oplus a_1}(u)=(-1)^{u\cdot \alpha}W_{a_3}(u)\;\;\wedge\;\;W_{a_3\oplus a_0\oplus a_1}(u)=(-1)^{u\cdot \alpha}W_{a_3\oplus a_0}(u),$$ for all $u\in\mathbb{Z}^n_2.$ In the same way, we can choose the rest of bent component functions such that for some $\beta \in\mathbb{Z}^n_2$ we have $$\star \{W_0(u),W_1(u),(-1)^{u\cdot \alpha}W_2(u),(-1)^{u\cdot \alpha}W_3(u)\}=(-1)^{u\cdot \beta} \{W_4(u),W_5(u),W_6(u),W_7(u)\},$$ where $u\in \mathbb{Z}^n_2.$ Clearly, this equality means that the right part satisfies the Sylvester-Hadamard recursion.
%\end{ex}
%%{\bf How do we choose $\alpha$ and $\beta$ so that the above equality, denoted by $\star$, is satisfied for all $u$ ???}
%
%In other words, fixing $\alpha$ and $\beta$ we actually define $a_1(x)$ and $a_2(x).$ The main consequence of the non-trivial selection of the component functions leads to a fact that more of the component functions may be bent (not only $a_3(x)$ and $a_0(x)$, or $a_2(x)$) and have the condition ($\triangle$) satisfied.

The question whether there exists more non-trivial methods to satisfy the condition $W^T=H^{(r)}_{2^p}$ remains open.
\begin{rem}
In the case when $n$ is odd, satisfying the condition ($\square$) is more complicated, since $W^T$ involves Sylvester-Hadamard signs and disjoint spectra functions.
\end{rem}

\section{Conclusion}\label{sec:concl}
The main contribution of this article is the derivation of a compact and efficient formula for computing the generalized Walsh-Hadamard spectra of generalized Boolean functions and its use for specifying some sufficient conditions for the functions in this class to have gbent property. The main remaining challenge is to address the problem of necessary conditions and to possibly establish the equivalence between the two, at least for some specific instances.

\newpage

\end{document}